\newtheorem{theorem}{Theorem}
\newtheorem{lemma}{Lemma}[section]
\newtheorem{corollary}[lemma]{Corollary}
\newtheorem{proposition}[lemma]{Proposition}
\newtheorem*{observation}{Observation}
\theoremstyle{definition}
\newtheorem{definition}[lemma]{Definition}
\theoremstyle{remark}
\newtheorem{remark}[lemma]{Remark}
\newcommand{\C}{\mathbb{C}}
\newcommand{\Z}{\mathbb{Z}}
\newcommand{\N}{\mathbb{N}}
\newcommand{\Q}{\mathbb{Q}}
\newcommand{\R}{\mathbb{R}}
\newcommand{\M}{\mathcal{M}}
\newcommand{\D}{\mathbb{D}}
\newcommand{\Cinf}{\overline{\mathbb{C}}}
\newcommand{\DoDef}{\mathcal{D}}
\newcommand{\Htop}{\mathcal{H}}
\newcommand{\Ncal}{\mathcal{N}_0}
\newcommand{\WB}{\mathcal{WB}}
\newcommand{\ind}[2]{\imath(#1,#2)}
\newcommand{\jind}[2]{\jmath(#1,#2)}
\newcommand{\Imag}{\operatorname{Im}}
\newcommand{\Real}{\operatorname{Re}}
\newcommand{\gate}[2][]{\mathbf{gate}_{#1}(#2)}
\newcommand{\lph}[2]{\tilde{\tau}_{#1}(#2)}
\newcommand{\hide}[1]{}
\title{Pi in the Mandelbrot set everywhere}
\author{Thies Brockmoeller \and Oscar Scherz \and Nedim Srkalovic}
\date{\today}
\begin{document}

\begin{abstract}
    The numerical phenomenon of $\pi$ appearing at parameters $c = 1/4$, $c=-3/4$ and $c=-5/4$ in the Mandelbrot set $\M$ has been known for over 30 years. In 2001, the first proof was provided in \cite{klebanoff} for the parameter $c=1/4$. Very recently in 2023, an even sharper result for $c=1/4$ was proved using holomorphic dynamics in \cite{paul}. This new proof also provided a conceptual understanding of the phenomenon. In this paper, we give, for the first time, a proof of the known phenomenon for the parameters  $c=-3/4$ and $c=-5/4$, which is also conceptual, and we provide a generalization of the phenomenon and the proof for all bifurcation points of the Mandelbrot set.
\end{abstract}

\maketitle

\section{Introduction}

The fact that ``$\pi$ occurs naturally in the Mandelbrot set'' has raised quite some attention since the 1980s or 1990s, not long after the Mandelbrot set itself was discovered and had become immensely popular. Many people independently observed the following.

\begin{observation}
	Fix an ``escape radius'' $R\geq2$ and consider the parameter $c_\alpha:=-3/4+it$, for $t\in\R\setminus\{0\}$. Define the ``escape time'' $N(c_t)$ as the minimal index in the sequence
	\begin{equation}
			z_0:=0, \quad  z_{n+1}=z_n^2+c_t
	\label{Eq:MandelIter}
	\end{equation}
	for which $|z_n|>R$. Then $N(c_t)\cdot |t|\to\pi$. 
	
	Similarly, for the parameter $c'_t:=1/4+t$ we have $N(c'_t)\cdot |t|^{1/2}\to\pi$ and for $c''_t := -5/4 - t^2 + it$ we have $N(c''_t)\cdot|t|\to\frac{\pi}{2}$.
\end{observation}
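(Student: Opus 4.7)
My plan is to apply a parabolic-implosion argument to an appropriate iterate of $f_c(z)=z^2+c$ at each of the three parameters. The central value $c^\ast\in\{1/4,-3/4,-5/4\}$ is a root of a hyperbolic component of $\M$: at $c^\ast=1/4$ the fixed point $z=1/2$ of $f_{c^\ast}$ is parabolic with multiplier $+1$; at $c^\ast=-3/4$ the fixed point $z=-1/2$ has multiplier $-1$, so $f^2_{c^\ast}$ has a parabolic fixed point of multiplier $+1$; at $c^\ast=-5/4$ the period-two orbit $\{(-1\pm\sqrt 2)/2\}$ has $f^2$-multiplier $4(1+c^\ast)=-1$, so $f^4_{c^\ast}$ has parabolic fixed points of multiplier $+1$. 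Along each of the given perturbation curves the multiplier crosses the unit circle, a ``gate'' opens near the parabolic orbit, and the orbit of the critical point $0$ must pass through this gate before escaping to infinity. The transit time through the gate is what $N(c)$ measures.

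For $c'_t=1/4+t$ the coordinate change $w=z-\tfrac12$ turns the map into $w\mapsto w+w^2+t$, modelled by the ODE $\dot w=w^2+t$ with explicit solution $w(n)=\sqrt t\,\tan(\sqrt t\,(n-n_0))$; the transit corresponds to $\tan$ sweeping from $-\pi/2$ to $+\pi/2$, so $n\sim\pi/\sqrt t$ and $N(c'_t)\sqrt t\to\pi$. For $c_t=-3/4+it$ I pass to $f^2$ and use the identity, obtained by direct expansion,
\[
 f_c^2(z)-z=(w^2-2w+s)(w^2+s),\qquad w=z+\tfrac12,\ s=c+\tfrac34,
\]
which near $w=0$ reduces the $f^2$-iteration to the model ODE $\dot w=-2w(w^2+s)$. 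The substitution $u=1/w^2$ linearises this to $\dot u=4+4su$, whose orbits are circles in the $u$-plane of radius $\sim 1/|s|$ with angular frequency $4|s|$; for $s=it$ the loop closes after $\pi/(2t)$ steps of $f^2$, giving $N(c_t)\cdot t\to\pi$. For $c''_t=-5/4-t^2+it$ I run the same argument for $f^4$ at the parabolic period-two orbit: writing $\epsilon=c+5/4$, the $f^4$-multiplier is $(-1+4\epsilon)^2=1-8\epsilon+16\epsilon^2+O(\epsilon^3)$ and the ODE in the analogous Fatou coordinate becomes $\dot u=\alpha u+C$ with $\alpha=-2((-1+4\epsilon)^2-1)=16\epsilon+O(\epsilon^2)$; the specific choice $\epsilon=-t^2+it$ makes $\alpha=16it+O(t^2)$ purely imaginary to leading order, so the $u$-trajectory closes after $2\pi i/\alpha\sim\pi/(8t)$ steps of $f^4$ and $N(c''_t)\cdot t\to\pi/2$.

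The main obstacle, as in every parabolic-implosion argument, is to upgrade these ODE heuristics to rigorous asymptotics. Three regimes need separate control: an approach phase, in which the critical orbit starting at $w_0\sim 1$ reaches the parabolic neighbourhood $\{|w|\sim\sqrt t\}$ within the $O(1/t)$-budget; a transit phase, in which the discrete iteration must shadow the model ODE flow uniformly in $t$ and in the position of the orbit; and an escape phase, in which, after exiting the neighbourhood, the orbit exceeds the escape radius $R$ in $o(1/t)$ further iterations. The transit phase is the essential one: following the strategy of \cite{paul} for $c=1/4$, one would conjugate the iteration to an approximate translation by truncated Fatou coordinates and control the remainder terms uniformly in the small parameter $t$. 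For $c=-3/4$ and $c=-5/4$ the parabolic point has multiplicity $k=2$ (the Fatou coordinate is $1/w^2$ rather than $1/w$), and the annulus $|w|\sim\sqrt t$, where the two terms of the model ODE are of comparable size, is the technically most delicate region; obtaining uniform error bounds there is what I expect to be the main difficulty.
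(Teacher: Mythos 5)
Your heuristic computations are correct and match the paper's answers: the factorization $f_c^{\circ 2}(z)-z=(w^2-2w+s)(w^2+s)$ is exact, the $u=1/w^2$ substitution linearises the model flow with coefficient $4s=\log\mu_2(c_0+s)+O(s^2)$, and the transit count $\pi/(2t)$ steps of $f^{\circ 2}$ (hence $N\cdot t\to\pi$) is exactly what the paper's lifted-phase formula $\lph{i}{f}\approx -2\pi i/(\mu_{qn}'(c_0)\alpha)$ yields; the $-5/4$ count likewise agrees. But the route you propose is genuinely different from the paper's. You attack the observation as literally stated — along the vertical line at $-3/4$ and the parabola at $-5/4$ — whereas the paper proves Theorem~\ref{Thm:One} only for sequences converging \emph{along the two parameter rays} landing at the satellite bifurcation, and then argues via the Koebe-based Theorem~\ref{Thm:Two} that nearby approaches (within a disk of comparable radius not meeting $\M$) inherit the same asymptotics up to $O(1)$. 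The paper never verifies that the vertical line or the parabola satisfies the hypothesis of Theorem~\ref{Thm:Two}; indeed it explicitly records that whether the vertical line through $-3/4$ misses $\M$ away from $-3/4$ is still open. So the paper proves a result "analogous to" the observation; your plan, if carried out, would prove the observation itself — which is more, but also harder.

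The gap you flag (upgrading ODE shadowing to uniform asymptotics) is real, and it is exactly what the paper outsources to Oudkerk's machinery: well-behaved maps, admissible gate structures, continuity of Fatou coordinates, and the lifted-phase formula (Theorem~\ref{the:formula lph}) replace your "truncated Fatou coordinate with controlled remainder." But there is a second gap you do not flag and that would actually sink a direct attempt along the vertical line: you assume without argument that "the orbit of the critical point $0$ must pass through this gate before escaping to infinity," i.e.\ that the critical orbit enters exactly one attracting petal, transits exactly one open gate, and escapes from exactly one repelling petal. In the paper this is the content of a separate combinatorial argument (inside the proof of Theorem~\ref{the:main theorem}) which crucially uses that for a parameter on a parameter ray, the critical value lies on a dynamic ray of the fixed external angle $\vartheta_\pm$; the permutation of those dynamic rays under $p_c^{\circ n}$ pins down the gate structure and forces a single transit. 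Along the vertical line $-3/4+it$ or the parabola $-5/4-t^2+it$ there is no such a priori combinatorial anchor: you would first need to know that these parameters lie outside $\M$ for small $t$ (open), and then control through which petals the critical orbit passes. Without that step, the model ODE has nothing to shadow. So your proposal is a correct computation plus an honest list of analytic difficulties, but the decisive structural lemma that makes the paper's argument close is missing.
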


More specifically, people have tabulated $N(c_t)$ and $N(c'_t)$ for the natural choice $t=10^{-n}$. Then the decimal digits of $N(c_t)$ are the first $n$ decimal digits of $\pi$ (and similarly the decimal digits of $N(c''_t)$ yield the first $n$ decimal digits of $\pi/2$), while the decimal digits of $N(c'_t)$, when $n$ even, are the first $n/2$ decimal digits of $\pi$. 

This occurrence of $\pi$ was considered mysterious by many, especially since the experiment itself is quite natural. The relevant parameters $c_t$ and $c'_t$ are indicated in Figure~\ref{Fig:MandelbrotPi}. In particular, people studied the question (still open, for all we know) whether the vertical line $c=-3/4+it$ intersects the Mandelbrot set in more points than just $c=-3/4$.

\begin{figure}[htbp]
\pgfdeclarelayer{background layer}
\pgfsetlayers{background layer,main}
\begin{tikzpicture}
    \fill[blue] (-1.64+2,0.16) circle (0.05);
    \fill[blue] (-1.64+1,0.16) circle (0.05);
    \fill[blue] (-1.64+0.5,0.16) circle (0.05);
    \fill[blue] (-1.64+0.25,0.16) circle (0.05);
    \fill[blue] (-1.64+0.125,0.16) circle (0.05);
    \fill[green] (-1.64,0.16) circle (0.05);
    \node[white] at (-1.8,-0.4) {$1/4$};
    \fill[blue] (-4.93,-0.78+2) circle (0.05);
    \fill[blue] (-4.93,-0.78+1) circle (0.05);
    \fill[blue] (-4.93,-0.78+0.5) circle (0.05);
    \fill[blue] (-4.93,-0.78+0.25) circle (0.05);
    \fill[blue] (-4.93,-0.78+0.125) circle (0.05);
    \fill[green] (-4.93,-0.78) circle (0.05);
    \node[white] at (-4.2,-0.7) {$-3/4$};
    \fill[blue] (4.25,-0.53+2) circle (0.05);
    \fill[blue] (5.1,-0.53+1) circle (0.05);
    \fill[blue] (5.32,-0.53+0.5) circle (0.05);
    \fill[blue] (5.37,-0.53+0.25) circle (0.05);
    \fill[blue] (5.385,-0.53+0.125) circle (0.05);
    \fill[green] (5.39,-0.53) circle (0.05);
    \node[white] at (6,-0.7) {$-5/4$};
    \begin{pgfonlayer}{background layer}
    \node at (-5,0) {\includegraphics[width=0.3\linewidth]{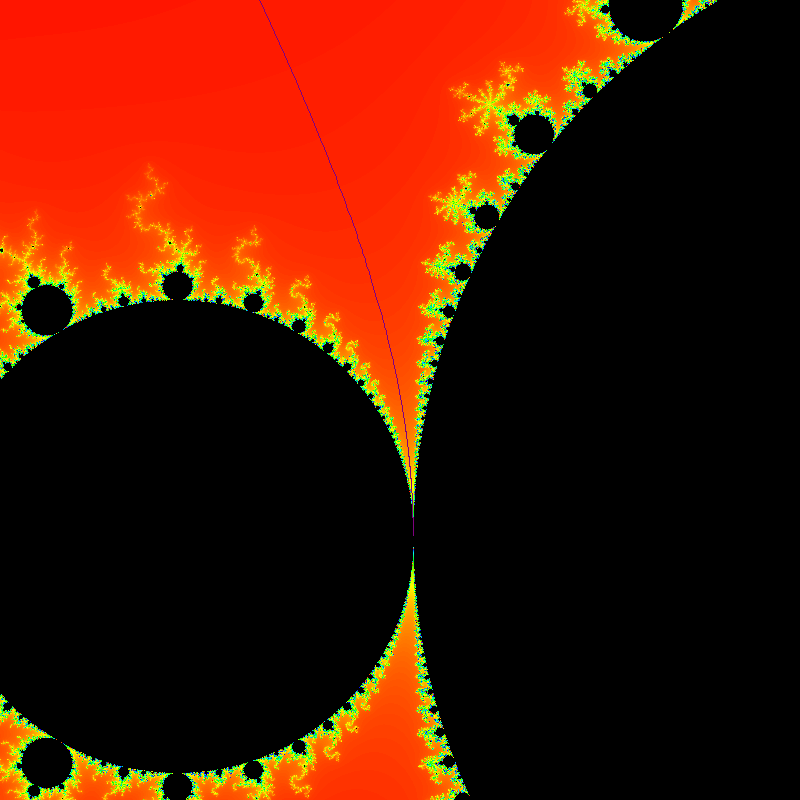}};
    \node at (0,0) {\includegraphics[width=0.3\linewidth]{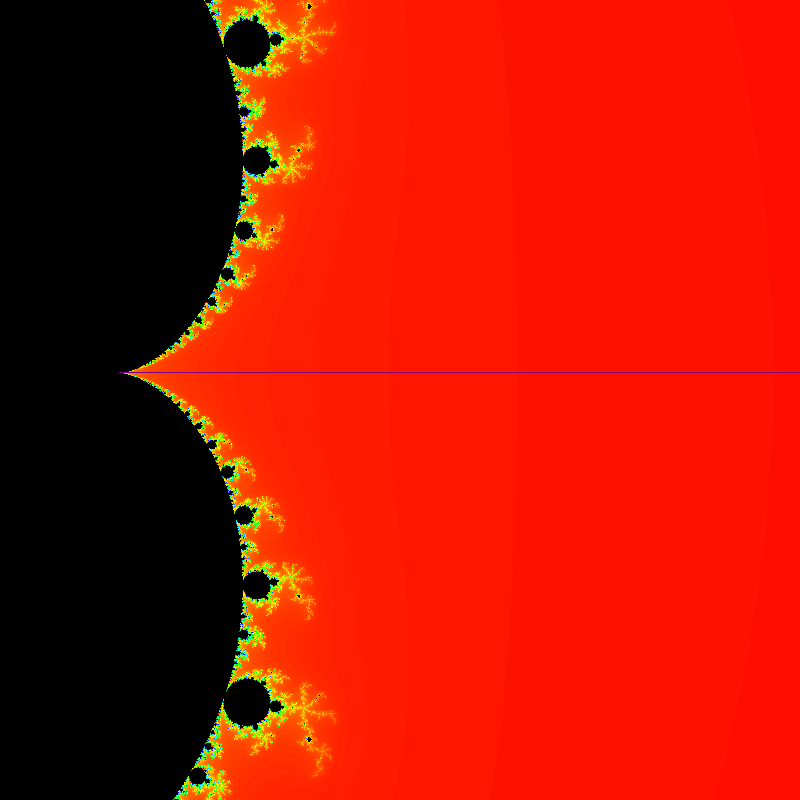}};
    \node at (5,0) {\includegraphics[width=0.3\linewidth]{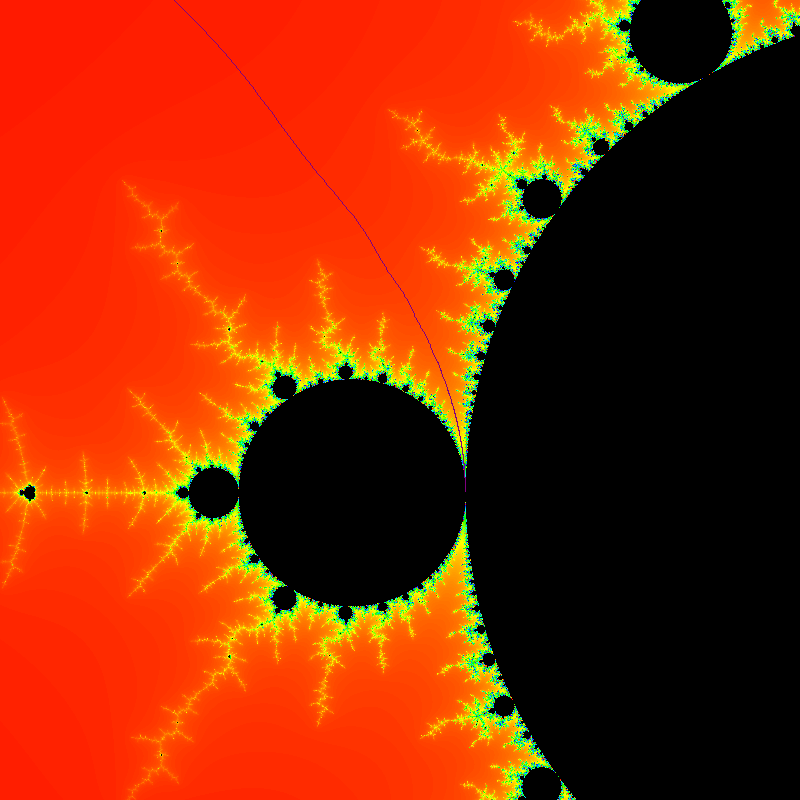}};
    \end{pgfonlayer}
\end{tikzpicture}
\caption{The parameters $c=-3/4$, $c'=1/4$ and $c''=-5/4$ in the Mandelbrot set, together with the parameter rays $\mathcal{R}_{1/3}$, $\mathcal{R}_{0/1}$ and $\mathcal{R}_{6/15}$ and sequences $c_t$, $c'_t$ and $c''_t$ approaching the bifurcations. The images are taken with \cite{mandel}.}	
\label{Fig:MandelbrotPi}
\end{figure}

Let us review the underlying equations. The famous Mandelbrot set is defined as
\begin{equation}
	\M :=\{c\in\C\mid \text{the sequence $z_0=0, z_{n+1}=z_n^2+c$ is bounded}\}. 
\end{equation}
It is well known that all $c\in\M$ satisfy $|c|\le 2$, and that $|z_n|>2$ for a single $n$ implies that the sequence diverges, so the underlying parameter $c$ is not in $\M$. Therefore, for any $c\in\C$ one iterates the sequence until one has $|z_n|>R$ for an arbitrary $R\ge 2$. If there is such an $n$, then $c\not\in\M$, and otherwise one stops the iteration after some predetermined iteration bound and declares $c$ as an ``approximate'' element of $\M$. 

The occurrence of $\pi$ is thus the outcome of a natural experiment to study $\M$ computationally. While the fact itself has been known for a long time, the only published proof appeared in 2001, more than a decade after the observation itself \cite{klebanoff}. It only covers the single case $c'=1/4$, it is not conceptual, and it does not provide an insightful explanation of why to expect $\pi$.

In 2015, a \emph{Numberphile} video was published that presents the observation, but it does not present either a proof or an explanation \cite{numberphile}. This video was widely watched and together with a second one attracted more than a million views, highlighting the impact of the phenomenon on a general audience, even without a conceptual explanation.

Very recently, in 2023, Siewert \cite{paul} developed a new proof and a conceptual explanation for the occurrence of $\pi$ at the single point $c'=1/4$, based on parabolic perturbation theory of \emph{simple} parabolic maps as developed by Shishikura \cite{shishikura_2000}. 

In this note, we provide the first proof for the original observation at the parameter $c=-3/4$. Moreover, we study the analogous question at all the infinitely many bifurcation parameters in the Mandelbrot set, we observe that an analogous result holds for all of them, and we develop a conceptual general proof of this general fact. Of course, the general result has to specify a natural sequence of points that converges to any given bifurcation parameter, and it has to take the ``natural scale'' at this parameter into account.

A \emph{parabolic parameter} is a point $c_0\in\M$ such that the polynomial $p_{c_0}(z)= z^2+c_0$ has two colliding periodic orbits (of equal or different periods). It is well known that this is equivalent to the fact that $p_{c_0}$ has a \emph{parabolic periodic point}, that is a point $z_0$ such that $p_{c_0}^{\circ n}(z_0)=z_0$ and $(p_{c_0}^{\circ n})'(z_0)$ is a $q$th root of unity (for details, see \Cref{sec:background}). It is well known that each parabolic parameter is of one of two possible types, \emph{primitive} if $q=1$ or \emph{satellite} if $q \geq2$. 

We can then state our main result as follows.

\begin{theorem}
\label{Thm:One}
Let $c_0$ be a parabolic parameter in $\M$ with parabolic periodic point $z_0$ and let $n$ be its period. Then $(p^{\circ n})'(z_0)$ is a $q$th root of unity. Fix an escape radius $R>2$. Let $c_k\in\C\setminus\M$ be a sequence such that $c_k\to c_0$ along one of the two parameter rays that land at $c_0$, and let $N(c_k)$ be the escape time of $c_k$. Then
\begin{itemize}
	\item if $c_0$ is of satellite type, then $N(c_k)\cdot |c_k-c_0|\cdot \tau(c_0)\longrightarrow\pi$;
	\item if $c_0$ is primitive, then $N(c_k)\cdot |c_k-c_0|^{1/2}\cdot \tau(c_0)\longrightarrow\pi$.
\end{itemize}
Here $\tau(c_0) = \frac{|\mu_{qn}'(c_0)|}{2qn}$ where $\mu_{qn}'(c_0)$ is the derivative of the natural multiplier map of the hyperbolic component whose root is $c_0$. Note that in the primitive case, we have $q=1$.
\end{theorem}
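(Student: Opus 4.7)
The plan is to reduce to the local dynamics near the parabolic periodic point $z_0$ for the first-return map $f_c := p_c^{\circ qn}$, and then apply parabolic perturbation theory in the spirit of Shishikura \cite{shishikura_2000} and Siewert \cite{paul} to count iterations through the ``gate'' opened by the perturbation.

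First I would establish the reduction $N(c_k) = qn \cdot T(c_k) + O(1)$, where $T(c_k)$ is the number of iterations of $f_{c_k}$ spent in a fixed small neighbourhood $U$ of $z_0$. Since at $c = c_0$ the critical orbit of $p_{c_0}$ lies in the immediate parabolic basin and is captured by one of the attracting petals of $z_0$, for $c$ near $c_0$ the critical orbit of $p_c$ enters $U$ after $O(1)$ iterations of $f_c$, and after leaving $U$ through the repelling direction it escapes past $|z| = R$ in $O(1)$ further iterations. Thus it suffices to compute the leading asymptotics of $T(c_k)$.

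Next I would analyze the local dynamics. In coordinates centred at $z_0$,
\begin{equation*}
    f_{c_0}(z) = z + A z^{q+1} + O(z^{q+2}), \qquad A \neq 0,
\end{equation*}
corresponding to one attracting petal ($q = 1$, primitive) or $q$ petals arranged with $q$-fold rotational symmetry ($q \geq 2$, satellite). In the satellite case the substitution $u = z^q$ reduces the local picture to a single-petal one up to a bounded combinatorial factor. Shishikura's parabolic perturbation theory then provides perturbed Fatou coordinates $\Phi_c$ conjugating $f_c$ (or its $u$-reduction) to the translation $w \mapsto w + 1$ on a ``cylinder-like'' domain covering both the entry and the exit of the gate. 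The passage time $T(c_k)$ then equals, up to $O(1)$, the Fatou-coordinate distance between these two points, which in the continuous-flow approximation is
\begin{equation*}
    T(c_k) \sim \int_{-\infty}^{\infty} \frac{dv}{\alpha + v^2} \;=\; \frac{\pi}{\sqrt{\alpha}},
\end{equation*}
where $\alpha = \alpha(c_k)$ is a small complex perturbation parameter obtained after centering the coordinate on the midpoint of the gate.

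The final step is to identify $\alpha$ in terms of $c - c_0$ and to invoke the parameter-ray hypothesis. Tracing the normalizations one finds $\alpha$ proportional to $(\mu_{qn}(c_k) - 1)^2$. In the satellite case $\mu_{qn}$ is analytic at $c_0$ with $\mu_{qn}(c) - 1 = \mu_{qn}'(c_0)(c - c_0) + O((c - c_0)^2)$, yielding $T(c_k) \sim \pi/(|\mu_{qn}'(c_0)| \cdot |c_k - c_0|)$; in the primitive case $\mu_{qn}$ has a square-root branch at $c_0$, and with the natural convention that $\mu_{qn}'(c_0)$ denotes the leading coefficient in $\mu_{qn}(c) - 1 \sim \mu_{qn}'(c_0)(c - c_0)^{1/2}$, one obtains $T(c_k) \sim \pi/(|\mu_{qn}'(c_0)| \cdot |c_k - c_0|^{1/2})$. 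The hypothesis that $c_k$ approaches $c_0$ along a landing parameter ray ensures that $\alpha$ is real and positive to leading order: landing rays are tangent to $\partial \M$ at $c_0$, which under $\mu_{qn}$ corresponds to $\mu_{qn}(c_k) - 1$ approaching $0$ along the tangent to the unit circle at $1$. This is precisely the direction that makes the integral evaluate to a clean $\pi/\sqrt{\alpha}$ without an extra phase. Multiplying by the factor $qn$ from the reduction to $f_c$ yields the stated limits with $\tau(c_0) = |\mu_{qn}'(c_0)|/(2qn)$.

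The principal obstacle is the uniform analytic control required for the local analysis: the perturbed Fatou coordinates must be valid not on a single petal but on a domain containing the entire gate, and the discrete-to-continuous approximation error must be controlled strongly enough to extract the leading $\pi$-constant. The primitive case falls within Shishikura's theory as used by Siewert \cite{paul}; the satellite case, where one must descend to the $u = z^q$ quotient and match coordinates across the $q$ petals, is the substantive new analytic content of the proof.
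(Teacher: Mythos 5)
Your high-level strategy is correct and matches the paper's: reduce to the first-return map $f_c = p_c^{\circ qn}$ near $z_0$, establish $N(c_k) = qn\cdot T(c_k) + O(1)$, use a parabolic-perturbation theory to estimate the transit time through the gate, and extract the asymptotics from the multiplier map $\mu_{qn}$. The identification of the leading-order behaviour of $\mu_{qn}(c_k)-1$ and the role of the tangency of the two hyperbolic components are also essentially the same ingredients the paper uses.

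However, there is a genuine gap at the heart of your treatment of the satellite case: the proposed substitution $u = z^q$ does not reduce the problem to a single-petal one. For the quotient map to be well defined, $f_c$ would have to commute with $z \mapsto e^{2\pi i/q}z$ near $z_0$, but only its leading term $z + Az^{q+1}$ has this symmetry; the higher-order terms and the perturbation do not, so there is no holomorphic map $\tilde f_c$ on the $u$-plane with $\tilde f_c(z^q) = f_c(z)^q$. (Concretely: for $c_0 = -3/4$ and $q = 2$, the map $p^{\circ 2}_{-3/4}$ has no exact involutive symmetry about the parabolic point $z_0 = -1/2$.) This is precisely why the paper abandons Shishikura's single-petal framework in the satellite case and instead imports Oudkerk's theory for $z + z^{q+1} + O(z^{q+2})$, which sets up $q$ attracting and $q$ repelling fundamental regions directly, defines gate structures (Proposition~\ref{prop:combinatorics}, Definition~\ref{def:gate structure}), and gives an exact formula for the lifted phase $\lph{i}{f} = -2\pi i / \log f'(\sigma_i)$ (Theorem~\ref{the:formula lph}). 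That exact discrete formula also replaces your continuous-flow integral heuristic, making the $O(1)$ error control unconditional (Lemma~\ref{lem:bound}) rather than an ``approximation error'' that still needs to be bounded.

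A secondary gap is combinatorial: you assert that the critical orbit passes through the gate once, but in the $q$-petal setting this is not automatic. The paper's proof of Theorem~\ref{the:main theorem} devotes a substantial argument to this, using that the dynamic rays at angles $\vartheta_\pm$ are permuted among the $q$ repelling petals by $p_{c_0+\alpha}^{\circ n}$, that they land at the critical orbit before the parabolic point breaks up, and that non-intersection forces each image of the ray to cross at most one gate. Without such an argument, one cannot rule out the critical orbit threading through several gates, which would multiply the transit time by a combinatorial factor and change the constant. Finally, the claim that $\alpha \propto (\mu_{qn}(c_k)-1)^2$ becomes ``real and positive to leading order'' needs care: if $\mu_{qn}(c_k)-1$ is asymptotically imaginary (which is what the tangency of $H_n$ and $H_{qn}$ gives after applying $\mu_{qn}$), its square is negative real, so the branch of $\sqrt{\alpha}$ and the sign conventions must be sorted out before the integral formula $\pi/\sqrt{\alpha}$ can be read off; Oudkerk's lifted-phase formula sidesteps this entirely.

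Incidentally, your reduction runs in the opposite direction from the paper's: you want to collapse the satellite ($q$ petals) to the primitive case via $u = z^q$, while the paper proves the satellite case first with Oudkerk's machinery and then deduces the primitive case by the coordinate change $\lambda = \sqrt{c - c_0}$, which turns the cusp into a tangency of two period-$n$ components and puts it into the $q=1$ satellite picture.
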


In fact, we can bound the speed of divergence more precisely, see Theorem~\ref{the:main theorem} for a more detailed statement.

The proof relies on parabolic bifurcation theory which describes a ``perturbed version'' of the well-known Flower Theorem. Hence, we still find attracting and repelling petals, however, now, points in attracting petals may not converge to the parabolic fixed point but instead pass through a ``gate'' formed by two fixed points. We then study curves that are orthogonal to the dynamics. These curves end at the two fixed points with a certain ``tangent angle''. Iterates of these curves then pass through the gate, while their ends rotate around the fixed point each iteration, the angle determined by the fixed point's multiplier $\mu$. In total, a rotation of $2\pi$ is done as the curves iterate from one attracting petal to one repelling petal. Thus, the escape time $N$ of the critical orbit starting in some attracting petal can be estimated by $N\approx 2\pi/\arg\mu$. The rigorous approach to this is provided in theories such as \cite{shishikura_2000} and \cite{oudkerk}. Also, see \Cref{rem:conceptual} and \Cref{fig:conceptual} for the conceptual understanding behind \Cref{Thm:One}.

Observe that the parameters $c=-3/4$ and $c''=-5/4$ are of satellite type, while $c'=1/4$ is primitive. $c$ and $c'$ have $\tau(c)=\tau(c')=1$ while $\tau(c'')=1/2$, so our result includes the one known result for $c=1/4$ (the associated parameter ray is $(1/4,\infty)\subset\R$), and it is analogous to the known results for $c=-3/4$ and $c''=-5/4$: the difference is that convergence to $ c$ and $c''$ in our approach is along the parameter rays (which is dynamically natural), while in the original experiments it was along a vertical line at $c=-3/4$ or parabola at $c''=-5/4$. Moreover, to our knowledge, the question of whether the vertical line $c_t = -3/4 + it$ intersects $\M$ only at $c=-3/4$ is still open.

This last issue raises the question of just how precisely one needs to choose the parameters that converge to a limiting parabolic parameter. The answer is ``not precisely at all''. This is our second result.

\begin{theorem}[Near the parameter ray]\label{Thm:Two}
    Let $c_0$ be a satellite bifurcation. Let $(c_n)\in\C\setminus\M$ be a sequence along one of the parameter rays of $c_0$ with $c_n\to c_0$. Then there exists a real number $1 < a \leq 8$ such that \Cref{Thm:One} remains true for a sequence $\tilde{c}_n\to c_0$ with the property that for $r_n := a|\tilde{c}_n - c_n|$ the disks $D_{r_n}(c_n)$ do not intersect $\M$.
\end{theorem}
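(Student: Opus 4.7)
The plan is to compare $N(\tilde{c}_n)$ with $N(c_n)$ and $|\tilde{c}_n - c_0|$ with $|c_n - c_0|$, then invoke \Cref{Thm:One} applied to the sequence $c_n$ on the parameter ray. For the first comparison I would use the Green's function $G(c) = \lim_n 2^{-n}\log|z_n(c)|$ of $\M$. From $z_{n+1} = z_n^2 + c$ one derives $\log|z_n(c)| = 2^n G(c) + O(1)$ uniformly once $|z_n|>2$, whence $N(c) = \log_2\log R - \log_2 G(c) + O(1)$ uniformly as $G(c) \to 0$. Since $D_{r_n}(c_n)$ avoids $\M$, $G$ is positive harmonic there, so Harnack's inequality at radius $r_n$ with $|\tilde{c}_n - c_n| = r_n/a$ yields
\begin{equation*}
    \frac{a-1}{a+1} \le \frac{G(\tilde{c}_n)}{G(c_n)} \le \frac{a+1}{a-1}.
\end{equation*}
Thus $|N(\tilde{c}_n) - N(c_n)| \le \log_2((a+1)/(a-1)) + O(1)$, and since $N(c_n) \to \infty$ by \Cref{Thm:One}, we get $N(\tilde{c}_n)/N(c_n) \to 1$.

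For the distance ratio, the disk condition alone gives only $|\tilde{c}_n - c_n| \le \operatorname{dist}(c_n, \M)/a \le |c_n - c_0|/a$, which is bounded but not $o(|c_n - c_0|)$. The key geometric input is the tangential landing of the parameter ray: the two hyperbolic components meeting at the satellite parameter $c_0$ share a common tangent line at $c_0$, and the parameter ray approaches $c_0$ along this very tangent. Consequently $\operatorname{dist}(c_n, \M) = O(|c_n - c_0|^2)$ along the ray, so $|\tilde{c}_n - c_n| = o(|c_n - c_0|)$ and $|\tilde{c}_n - c_0|/|c_n - c_0| \to 1$. Combining both comparisons,
\begin{equation*}
    N(\tilde{c}_n)\cdot|\tilde{c}_n - c_0|\cdot\tau(c_0) = N(c_n)\cdot|c_n - c_0|\cdot\tau(c_0)\cdot(1+o(1)) \longrightarrow \pi.
\end{equation*}

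The main obstacle is the geometric step. Rigorously establishing the tangential landing of the parameter ray and the resulting quadratic estimate $\operatorname{dist}(c_n, \M) = O(|c_n - c_0|^2)$ likely requires the explicit local parameterization of the two hyperbolic components at $c_0$ (for $c_0 = -3/4$, the main cardioid and the period-$2$ disk, both tangent to the vertical line at $c_0$) together with Douady--Hubbard landing theory to pin down the tangent direction of the ray from its external angle. The explicit bound $a \le 8$ then emerges by calibrating this geometric estimate against the Harnack constants: a larger $a$ yields a tighter escape-time comparison but imposes a stronger geometric requirement on $\tilde{c}_n$, and $a=8$ appears to be a convenient value where both estimates go through simultaneously.
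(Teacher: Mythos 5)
Your approach is genuinely different from the paper's and, if anything, stronger. The paper applies the Koebe quarter theorem twice (to $\Phi_\M$ and $\Phi_\M^{-1}$) to show that $D_{R/8}(c_n)$ maps into a disk $D_{R'}(\Phi_\M(c_n))$ in $\Phi_\M$-coordinates, then sandwiches the escape time of $\tilde c_n$ between escape times of parameters on the ray $\mathcal{R}_\vartheta$ in $\partial D_R(c_n)$, and finally invokes \Cref{the:main theorem} at the perturbed parameter $\alpha + O(\alpha^2)$ to conclude $N(\tilde c_n) = N(c_n) + O(1)$; the constant $8$ falls out of the quarter theorem, not from any calibration with a geometric estimate as your closing remark suggests. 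You instead work directly with the Green's function $G = \log|\Phi_\M|$, establish $N(c) = -\log_2 G(c) + O(1)$ uniformly, and use Harnack on the disk $D_{r_n}(c_n)$ to get $|N(\tilde c_n) - N(c_n)| \le \log_2\frac{a+1}{a-1} + O(1)$. This is cleaner in that it bypasses the quasi-monotonicity claim ``$N(c')\le N(c'')\Leftrightarrow|\Phi_\M(c')|\ge|\Phi_\M(c'')|$'' (which is only true up to $O(1)$) and it makes the appearance of $\log_2\frac{a+1}{a-1}$ transparent; notably, since this bound is $O(1)$ for every fixed $a>1$, your argument, once the details are filled in, would establish the conclusion for all $a > 1$, which the paper explicitly states it does not prove and only conjectures. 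Both arguments hinge on the same geometric input: the quadratic decay $\operatorname{dist}(c_n,\M) = O(|c_n - c_0|^2)$ coming from the tangency of $\partial H_n$ and $\partial H_{qn}$ at $c_0$. The paper obtains this from the analyticity and tangency of the two boundaries, as already discussed in the proof of \Cref{prop:well behaved} (with a pointer to the Guckenheimer--McGhee estimates), so what you flag as the main obstacle is in fact available; once you cite that, and supply the short uniform derivation of $N(c) = -\log_2 G(c) + O(1)$ (bound $\log|z_{N(c)}|\in(\log R, 2\log R + O(1))$ and control the tail of $\sum_{k>N(c)}2^{-k}\log|1+c/z_{k-1}^2|$), your proof is complete. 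The final combination step should be written with a bit more care: $N(\tilde c_n)|\tilde c_n - c_0| = (N(c_n)+O(1))\,|c_n-c_0|(1+o(1))$, and the cross term $O(1)\cdot|c_n-c_0|$ vanishes, so the product still tends to $\pi/\tau(c_0)$.
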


We believe that \Cref{Thm:Two} is true for all $a>1$ which is a stronger statement. Geometrically for $a=2$, this means that the $\tilde c_n$ are closer to $c_n$ than to $\M$.
In this situation, the escape times for $\tilde c_n$ and $c_n$ differ at most by a constant that depends only on $a$.

In all statements, the precise choice of the escape radius $R$ is irrelevant: the escape times for different values of $R$ differ only by constants.

\subsection{Background}\label{sec:background}

Some background in complex dynamics and the Mandelbrot set is given here. In the following definitions let $f:U\to U$ be some holomorphic map on a domain $U\subseteq\C$. Denote the $n$th iteration of $f$ by $f^{\circ n}$. Also define $p_c(z) := z^2 + c$ for some $c\in\M$.

\begin{definition}[Orbit, periodic points, multiplier]
    Let $z_0\in U$. Then the \emph{orbit of $z_0$ under the map $f$} is the sequence $(z_n) = (f^{\circ n}(z_0))$.
    
    We call $z_0$ a \emph{periodic point of exact period $n$} if $n\geq1$ is the smallest number such that $f^{\circ n}(z_0) = z_0$. If $n=1$, then $z_0$ is a \emph{fixed point} of $f$. Let $\mu := (f^{\circ n})'(z_0)$ be the \emph{multiplier} of $z_0$. We call $z_0$
    \begin{itemize}
        \item \emph{superattracting} if $|\mu| = 0$,
        \item \emph{attracting} if $|\mu| < 1$,
        \item \emph{indifferent} if $|\mu| = 1$,
        \item and \emph{repelling} if $|\mu| > 1$.
    \end{itemize}
\end{definition}

\begin{definition}[Parabolic periodic point]
   Let $z_0\in U$ be a periodic point of exact period $n$ under the map $f$. We call $z_0$ a \emph{parabolic} periodic point if there exists an $r\in\Q/\Z$ such that $(f^{\circ n})'(z_0) = e^{2\pi i r}$.
\end{definition}

\begin{definition}[Hyperbolic components, multiplier map]
    A \emph{hyperbolic component of period $n$} of the Mandelbrot is a connected component of the set of parameters $c$ such that $p_c$ has an attracting periodic point of period $n$.
    
    Let $H_n$ be such a component. Then define the biholomorphic \emph{multiplier map} $\mu_n:H_n\to\D$ as the multiplier of the unique attracting periodic point of the parameter $c$. One can extend the multiplier map to the homeomorphism $\mu_n:\overline{H}_n\to\overline{\D}$. Then the \emph{root} of $H_n$ is $c_0 := \mu_n^{-1}(+1)$.
\end{definition}

\begin{definition}[Parabolic parameter]
    Let $c_0\in\M$. If $p_{c_0}$ has a parabolic periodic point, then we call $c_0$ a parabolic parameter.
\end{definition}

\begin{proposition}[Bifurcations]
    If $p_{c_0}$ has a parabolic periodic point $z_0$ of exact period $n$ and with multiplier $\mu = e^{2\pi i p/q} \neq 1$ with $p$ and $q$ coprime, then $c_0$ is on the boundary of a hyperbolic component of period $n$ and of another hyperbolic component of period $qn$.
    
    The parabolic periodic point $z_0$ breaks up under a perturbation $c$ close to $c_0$ into one orbit of exact period $n$ of $p_c$ and one orbit of exact period $qn$ of $p_c$.
\end{proposition}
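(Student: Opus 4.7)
The plan is to treat the two assertions by separate local analyses at $c_0$: the holomorphic implicit function theorem for the period-$n$ branch, and parabolic bifurcation theory (Leau--Fatou) for the period-$qn$ branch.

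First I would continue $z_0$ as a period-$n$ point of $p_c$. Let $F(c,z) := p_c^{\circ n}(z)-z$. The hypothesis $\mu \neq 1$ gives $\partial_z F(c_0,z_0) = \mu-1 \neq 0$, so the holomorphic implicit function theorem yields a unique holomorphic $z(c)$ near $c_0$ with $p_c^{\circ n}(z(c))=z(c)$ and $z(c_0)=z_0$. The multiplier $\nu(c) := (p_c^{\circ n})'(z(c))$ is then holomorphic with $\nu(c_0)=\mu \in \partial\D$. To deduce that $c_0$ lies on the boundary of a period-$n$ hyperbolic component $H_n$, I would invoke the Douady--Hubbard theorem that the multiplier map $\mu_n\colon H_n \to \D$ is a biholomorphism extending homeomorphically to the closure; in particular $\nu'(c_0)\neq 0$, so $\nu^{-1}(\D)$ is a half-plane-like region near $c_0$, a component of which is (part of) $H_n$.

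Next, for the period-$qn$ component I would set $g_c := p_c^{\circ n}$, so that $g_{c_0}^{\circ q}$ fixes $z_0$ with multiplier $\mu^q = 1$. By the Leau--Fatou flower theorem, $g_{c_0}^{\circ q}(z)-z$ vanishes at $z_0$ to order $\nu q+1$, where $\nu \ge 1$ is the number of cycles of attracting petals. The decisive non-degeneracy input is that $\nu = 1$ for the quadratic family: each cycle of attracting petals must contain the forward orbit of a critical point in its immediate basin, and $p_{c_0}$ has the unique critical point $0$, forcing $\nu \le 1$. Consequently $g_c^{\circ q}(z)=z$ has exactly $q+1$ solutions near $z_0$ for $c$ close to $c_0$: one is the continuation $z(c)$ from the first step, and the remaining $q$ are permuted transitively by $g_c$ (the permutation is conjugate to rotation by $p/q$ on the nonzero $q$-th roots of unity, inherited from the linearization of $g_{c_0}$ modulo the parabolic degeneracy). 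These $q$ points form a single $p_c$-cycle of exact period $qn$. Boundary membership $c_0 \in \partial H_{qn}$ then follows by computing the multiplier of this bifurcated cycle as a holomorphic function of $c$ and again invoking the Douady--Hubbard biholomorphism $\mu_{qn}\colon H_{qn}\to\D$.

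The main obstacle in the outline is not the local parabolic analysis, which is classical, but the non-vanishing of the derivatives $\mu_n'(c_0)$ and $\mu_{qn}'(c_0)$ that underpin the boundary statements. Establishing that the multiplier maps are biholomorphisms onto the disk requires the global quasiconformal surgery argument of Douady and Hubbard; I would cite this as a black box rather than reprove it. A secondary subtlety is verifying the transitivity of the permutation of the $q$ bifurcating fixed points of $g_c$, which follows cleanly from the standard description of the Ecalle--Voronin cylinders or, more elementarily, from a Puiseux expansion of $g_c^{\circ q}(z)-z$ in $(c-c_0)^{1/q}$ near $z_0$.
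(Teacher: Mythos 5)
The paper does not prove this proposition; it is stated in the background section as a standard fact (essentially from the Orsay notes of Douady--Hubbard), so there is no internal proof to compare against. Your sketch follows the standard argument, and the two subtleties you flag at the end --- non-vanishing of the multiplier derivatives and transitivity of the permutation --- are the right ones.

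One step as written is both a non-sequitur and circular. You deduce ``in particular $\nu'(c_0) \neq 0$'' from the Douady--Hubbard theorem that $\mu_n \colon H_n \to \D$ is a biholomorphism extending homeomorphically to $\overline{H}_n$. A homeomorphic boundary extension does not give a conformal one with non-vanishing derivative, and to invoke any property of $H_n$ you must already know that $c_0$ lies on the boundary of a period-$n$ hyperbolic component --- which is precisely what you are trying to establish. The repair is cheaper: $\nu$ is holomorphic near $c_0$ with $|\nu(c_0)| = 1$ and is non-constant (parabolic parameters of fixed period $n$ and fixed rotation number $p/q$ form a finite algebraic set, or: for $|c|$ large all cycles are repelling). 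The open mapping theorem then puts parameters with $|\nu(c)| < 1$ in every neighborhood of $c_0$, and since the continued cycle has exact period $n$ nearby, these lie in period-$n$ hyperbolic components, giving $c_0 \in \partial H_n$. The identical open-mapping argument applied to the (symmetric, hence single-valued holomorphic) multiplier $\rho(c) = \prod_{i=1}^{q} g_c'(w_i(c))$ of the bifurcated $q$-cycle of $g_c$, which satisfies $\rho(c) \to \mu^q = 1$ as $c \to c_0$, gives $c_0 \in \partial H_{qn}$. The stronger transversality $\mu_n'(c_0), \mu_{qn}'(c_0) \neq 0$ is true --- it is the Guckenheimer--McGehee relation the paper uses later --- but it is not needed for mere boundary membership.

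Two further small points. First, to get $\nu = 1$ (so that the vanishing order is exactly $q+1$) you correctly appeal to the theorem that each cycle of immediate parabolic basins absorbs a critical orbit, and $p_{c_0}$ has a single critical point; state this as the theorem it is rather than a heuristic. Second, your cyclic permutation of the $q$ bifurcating points only gives a $g_c$-orbit of exact period $q$, i.e.\ a $p_c$-period dividing $qn$; you should add that $z(c)$ is the unique fixed point of $g_c$ near $z_0$ (implicit function theorem), so the $q$ points are disjoint from $z(c)$ and none has $p_c$-period dividing $n$, and that the resulting $p_c$-orbit splits into $q$ points near each of the $n$ distinct parabolic orbit points $p_{c_0}^{\circ k}(z_0)$, hence has $qn$ distinct elements. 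With those adjustments the outline is complete.
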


\begin{definition}[Bifurcation]
        We call $c_0$ a \emph{bifurcation parameter of period $n$ to period $qn$} if $c_0$ is a common boundary point of hyperbolic components of period $n$ and $qn$.
\end{definition}

\begin{proposition}[Roots of hyperbolic components]
    If $p_{c_0}$ has a parabolic periodic point $z_0$ of exact period $n$, then it is on the boundary of a hyperbolic component of period $n$. There are two cases:
    \begin{itemize}
        \item \emph{primitive case:} the multiplier $\mu = (p_{c_0}^{\circ n})'(z_0) = +1$; then $c_0$ is the root of a hyperbolic component of period $n$;
        \item \emph{satellite case (bifurcation):} the multiplier $\mu = (p_{c_0}^{\circ n})'(z_0)$ is a $q$th root of unity with $q\geq2$; then $c_0$ is on the boundary of a hyperbolic component of period $n$, and the root of a hyperbolic component of period $qn$.
    \end{itemize}
\end{proposition}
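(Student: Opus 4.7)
The plan is to study the zero sets of $F_m(c,z):=p_c^{\circ m}(z)-z$ near $(c_0,z_0)$ for $m=n$ and, in the satellite case, also $m=qn$. The implicit function theorem handles the local picture whenever the multiplier is not $1$, a Leau--Fatou expansion of $p_{c_0}^{\circ n}$ at $z_0$ handles the parabolic direction, and Weierstrass preparation converts the local vanishing orders into precise counts of nearby fixed points. Finally, the Douady--Hubbard theorem that each multiplier map is a biholomorphism onto $\D$ turns the resulting analytic families into statements about hyperbolic components.

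In the satellite case $\mu=e^{2\pi ip/q}$ with $q\ge 2$, one has $\partial_z F_n(c_0,z_0)=\mu-1\neq 0$, so the implicit function theorem produces a holomorphic family $z(c)$ of fixed points of $p_c^{\circ n}$ with multiplier $\mu(c)$ holomorphic and $\mu(c_0)=\mu$; non-constancy of $\mu(c)$ (from Douady--Hubbard) places $c_0$ on the boundary of a period-$n$ hyperbolic component. To locate the period-$qn$ component, write $f:=p_{c_0}^{\circ n}$ and expand $f^{\circ q}(z)=z+a(z-z_0)^{q+1}+O((z-z_0)^{q+2})$ using the Leau--Fatou flower theorem; the Fatou--Shishikura inequality, combined with the fact that $p_{c_0}$ has only one critical point, rules out higher-order vanishing and forces $a\neq 0$. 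Hence $z_0$ is a zero of $F_{qn}(c_0,\cdot)$ of multiplicity exactly $q+1$, and Weierstrass preparation gives, for $c$ near $c_0$, exactly $q+1$ nearby fixed points of $p_c^{\circ qn}$: one of them is $z(c)$, while the remaining $q$ fail to be fixed by $p_c^{\circ n}$ for $c\neq c_0$, are permuted by $p_c^{\circ n}$ to first order by multiplication by $\mu$, and therefore form a single cycle of exact period $qn$. Its multiplier at $c_0$ equals $\mu^q=1$, so $c_0$ is the root of a period-$qn$ hyperbolic component.

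In the primitive case $\mu=1$, the same argument is applied directly to $F_n$ with $q=1$: the Leau--Fatou expansion becomes $p_{c_0}^{\circ n}(z)-z=a(z-z_0)^2+\dots$ with $a\neq 0$, so Weierstrass preparation produces two period-$n$ fixed points of $p_c^{\circ n}$ that coincide at $c=c_0$ with common multiplier $1$. Non-constancy of the multiplier map of the component carrying these orbits places some nearby $c$ inside the disc $|\mu|<1$, identifying $c_0$ as the root of a period-$n$ hyperbolic component.

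The main obstacle is the multiplicity computation for $F_{qn}$ (and for $F_n$ in the primitive case): one must combine the general Leau--Fatou normal form $z+a(z-z_0)^{\nu q+1}+\dots$ with the Fatou--Shishikura bound to conclude $\nu=1$, and then verify that the $q$ newly appearing fixed points assemble into one cycle of full length $qn$ rather than splitting into shorter sub-orbits. The latter follows from the observation that on the set of new fixed points the action of $p_c^{\circ n}$ is approximately rotation by the primitive $q$th root of unity $\mu$, which has no non-trivial invariant subset of size less than $q$. Once these local multiplicities and cycle lengths are settled, the identification with hyperbolic components is immediate from the Douady--Hubbard theory.
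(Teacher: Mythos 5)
The paper states this proposition as classical Douady--Hubbard background and does not prove it, so there is no proof of record to compare against; I will therefore assess your argument on its own. Your outline is the standard one and is essentially correct: the implicit function theorem handles the continuation of the period-$n$ orbit (since $\mu\neq 1$), the Leau--Fatou normal form together with the Fatou--Shishikura inequality for a unicritical map gives the exact multiplicity $q+1$ of $z_0$ as a zero of $F_{qn}(c_0,\cdot)$, Weierstrass preparation counts the $q+1$ nearby zeros, and the rotation-by-$\mu$ picture identifies the $q$ new points as a single cycle of exact period $qn$ whose multiplier tends to $\mu^q=1$.

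Two places deserve a little more care. First, to rule out sub-cycles, it is cleaner to argue as follows: for $c\neq c_0$ near $c_0$ the implicit function theorem applied to $F_{dn}$ for each $1\le d<q$ produces a \emph{unique} simple zero near $z_0$, namely $z(c)$; hence none of the $q$ new Weierstrass roots can have exact period $dn$ under $p_c$ for any proper divisor $d$ of $q$, forcing exact period $qn$. This replaces the slightly informal ``approximate rotation has no small invariant subset'' with a multiplicity count that is already available from your setup. Second, at the final step you need the multiplier of the new period-$qn$ cycle, viewed as a holomorphic function $\nu(c)$ near $c_0$ with $\nu(c_0)=1$, to be an open (equivalently non-constant) map, so that nearby parameters land in the open unit disk and hence in a period-$qn$ hyperbolic component; this is part of the Douady--Hubbard multiplier-map theorem you invoke, but it is worth stating explicitly since it is exactly what turns the algebraic picture into the statement ``$c_0$ is the root of $H_{qn}$.'' With those two points made precise, the argument is complete, and the primitive case is correctly handled as the $q=1$ specialization.
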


\begin{definition}[Dynamic rays]
    Let $\varphi_c$ denote the Böttcher map of $p_c$. Then the \emph{dynamic ray at angle $\vartheta\in\R/\Z$} is the set $R_{p_c}(\vartheta) := \varphi_c^{-1}\left((1,\infty)\cdot e^{2\pi i\vartheta}\right)$.
\end{definition}

\begin{definition}[Riemann map of $\C\setminus\M$]
    Let $\Phi_\M:\C\setminus\M\to\C\setminus\overline{\D}$ be the Riemann map of the complement of $\M$ as defined by Douady and Hubbard in \cite{orsay}.
\end{definition}

\begin{definition}[Parameter rays]
    A \emph{parameter ray at angle $\vartheta\in\R/\Z$} is the set $\mathcal{R}_\vartheta := \Phi_\M^{-1}\left((1,\infty)\cdot e^{2\pi i\vartheta}\right)$.
\end{definition}
It is known that exactly two parameter rays land at each parabolic parameter $c_0\in\M$ (except for $c_0=1/4$).

\section{Overview of Oudkerk's theory}\label{sec:oudkerk}

The theory of parabolic perturbation was initially developed by Douady in \cite{orsay} to study the behavior of the Mandelbrot set close to parabolic parameters. It was further developed by Shishikura in \cite{shishikura_2000}. Both study maps of parabolic periodic points conjugate to $z\mapsto z + z^2 + O(z^3)$ and perturbations thereof. However, in the Mandelbrot set only parabolic parameters of the primitive type are described by these theories. 

Hence, when studying parabolic parameters of bifurcation type, a more general theory describing perturbations of maps $z\mapsto z + z^{q+1} + O\left(z^{q+2}\right)$ with integers $q\geq1$ is needed. This was provided by Oudkerk in his still unpublished \cite{oudkerk} who built upon the previously mentioned work. This is why our results are based on his theory and why the following section is dedicated to giving an overview of the necessary definitions and statements.

We will look at a class of maps called \emph{well behaved}. These maps exhibit many useful properties due to their relatively easily constructible Fatou coordinates. We will directly cite definitions and other explanations from \cite{oudkerk}.

\begin{definition}[Compact-open topology; see {\cite[Definition~2.1.1]{oudkerk}}] 
    For any holomorphic map $f$ defined on an open subset of $\Cinf$ let $\DoDef(f)$ denote the domain of definition of $f$. Now set
    \[
    \Htop:=\left\{ \text{$(f,\DoDef(f))\mid \DoDef(f)\subset\C$ open and  $f\colon \DoDef(f)\to\C$ holomorphic} \rule{0pt}{11pt} \right\}.
    \]
    Define a (non-Hausdorff) topology on $\Htop$ so that, given $\varepsilon>0$ and $K\subset\DoDef(f)$ compact, a neighborhood of $f_1\in\Htop$ is the set
    \[
        N(f_1, K, \varepsilon) := \{(g,\DoDef(g))\in\Htop\mid K\subset{\DoDef}(g),
        \|f_1-g\|_K < \varepsilon \}
        \;.
    \]
\end{definition}
    
    In this topology,    
    $f_m\to f$ if and only if for every compact set $K\subset\DoDef(f)$ there is an $m_0$ so that $K\subset\DoDef(f_m)$ for every $m\geq m_0$, and $f_m|_K\to f_K$ uniformly as $m\to +\infty$.     
    Roughly speaking, this means that $f_m$ converges to $f$ uniformly on compact subsets of $\DoDef(f)$.

    We will write $f_k\to f$ for some $(f,\DoDef(f))\in\Htop$ to mean convergence in this compact-open topology.

We take a small $r_0>0$ such that $K_0:=\overline{D_{2r_0}}\subseteq\DoDef(f_0)$. We also take a small open neighborhood $\Ncal$ of $f_0$ in the compact-open topology such that $K_0\subseteq\DoDef(f)$ for all $(f,\DoDef(f))\in\Ncal$.

The following definitions and propositions will rely on these fixed choices of $r_0$, $K_0$, and $\Ncal$. However, when applying Oudkerk's theory in \Cref{sec:application}, $r_0$, etc. can be chosen relatively freely since the maps we will examine are always defined on the entire complex plane $\C$.

\begin{definition}[Holomorphic index $\ind{f}{\sigma}$]
    Let $f$ be holomorphic and $\sigma$ a simple fixed point of $f$. Then we call
    \[
        \ind{f}{\sigma} := \frac{1}{1 - f'(\sigma)}
    \]
    the \emph{holomorphic index}.
\end{definition}

Oudkerk's definition of well behaved functions relies on certain properties of the vector fields
\begin{align*}
    \dot z = f(z) - z && \text{and} && \dot z = i[f(z) - z].
\end{align*}
As a continuous-time dynamical system, they serve as a rough approximation for the discrete-time dynamical system of iterated holomorphic functions. This is why we need the following definition.

\begin{definition}[Maximal solution of a vector field, see {\cite[Definition~2.2.1]{oudkerk}}]
    Let $V:D\to\C$ be holomorphic, where $D\subseteq\C$ and let $z_0\in D$. Suppose that $\gamma:I\to\C$ satisfies
    \begin{enumerate}
        \item $I$ is an interval in $\R$, $0\in I$;
        \item $\gamma$ solves $\dot z = V(z)$;
        \item $\gamma(0) = z_0$.
    \end{enumerate}
    We say that $\gamma$ is \emph{maximal} if given any other $\Tilde{\gamma}:\Tilde{I}\to\C$ satisfying (1), (2) and (3) we have $\Tilde{I} \subseteq I$.
\end{definition}

For an integer $q\ge 1$, we now define the repelling directions $z_{k,-} := r_0e^{2\pi i (k-1)/q}$ and attracting directions $z_{k, +} := e^{\pi i/q}z_{k, -}$ for $k \in \Z/q\Z$. Given an $i\in\Z/q\Z$, $s\in\{+, -\}$ and $f\in\Ncal$, we let $\gamma_{i,s,f}:I\to\C$ be the maximal solution of the vector field $\dot z = i[f(z) - z]$ satisfying $\gamma_{i,s,f}(0) = z_{i,s}$. Also let $\ell_{i,s,f} := \gamma_{i,s,f}(I)$.

\begin{definition}[well behaved, $\WB$, see {\cite[Definition~2.3.1]{oudkerk}}]
    We say that $f\in\Ncal$ is \emph{well behaved} if every forward and backward trajectory for the vector field $\dot z = i[f(z) - z]$ passing through the points $z_{i,s}$ stays in the disc $D_{r_0/2}$ once it has entered.

    More specifically, for each $i\in\Z/q\Z$ and $s\in\{+,-\}$ there are some $t_-, t_+\in\R$ such that $t_- < 0 < t_+$ and
    \begin{align*}
        \gamma_{i,s,f}\left((-\infty, t_-)\right) &\subset D_{r_0/2}, \\
        \gamma_{i,s,f}\left([t_-, t_+]\right) &\subset K_0 \setminus D_{r_0/2} & \text{and}\\
        \gamma_{i,s,f}\left((t_+, +\infty)\right) &\subset D_{r_0/2}.
    \end{align*}
    Define $\WB := \{f\in\Ncal \mid \text{$f$ is well behaved}\}$.
\end{definition}

The reason for limiting ourselves to well behaved maps is that their fundamental regions and Fatou coordinates are fairly easy to construct, while also describing the behavior of many interesting maps. In particular, all interesting\footnote{The only non-well behaved maps close to parabolic parameters lie inside of the hyperbolic components, which do not exhibit interesting dynamics for our purposes.} maps $p_c$ with parameters close to some bifurcation $c_0$ are well behaved making the definition highly useful for our discussion of perturbations of $c_0$ along the parameter rays in \Cref{sec:application}.

Let $i\in\Z/q\Z$ and $s\in\{+,-\}$. If $\lim_{t\to\pm\infty}\gamma_{i,s,f}(t)$ exists then we let
\[
    \gamma_{i,s,f}(\pm\infty) := \lim_{t\to\pm\infty}\gamma_{i,s,f}(t).
\]
Both $\gamma_{i,s,f}(+\infty)$ and $\gamma_{i,s,f}(-\infty)$ will be fixed points for $f$ (see \Cref{prop:combinatorics}).

\begin{proposition}[Combinatorics for well behaved maps, see {\cite[Proposition~2.3.2]{oudkerk}}]\label{prop:combinatorics}
    If $f\in\WB$ then the following hold.
    \begin{enumerate}
        \item Every trajectory $\gamma_{i,s,f}(t)$ converges to a fixed point (close to 0) as $t\to\pm\infty$.
        \item For any fixed point $\sigma$ of $f$ in $K_0$, there is some $i\in\Z/q\Z$ and $s\in\{+,-\}$ such that either $\gamma_{i,s,f}(+\infty) = \sigma$ or $\gamma_{i,s,f}(-\infty) = \sigma$.
        \item For all $i\in\Z/q\Z$ we have $\gamma_{i,-,f}(+\infty) = \gamma_{i,+,f}(+\infty)$ and $\gamma_{i,-,f}(-\infty) = \gamma_{i-1,+,f}(-\infty)$.
        \item For each $i\in\Z/q\Z$ and $s\in\{+,-\}$, either the closure of $\ell_{i,s,f}$ is homeomorphic to a circle (in which case $\gamma_{i,s,f}(+\infty) = \gamma_{i,s,f}(-\infty)$ is a multiple fixed point), or there is a unique $j\in\Z/q\Z$ so that the closure of $\ell_{i,s,f}\cup\ell_{j,\overline{s},f}$ is homeomorphic to a circle, where $s\neq\overline{s}\in\{+,-\}$.
        \item For any $i\in\Z/q\Z$ and $s\in\{+,-\}$ we have $\ell_{i,s,f}\cap f(\ell_{i,s,f}) = \varnothing$, and the closure of $\ell_{i,s,f} \cup f(\ell_{i,s,f})$ is a Jordan contour which bounds a closed Jordan domain $S_{i,s,f}$. These $S_{i,s,f}$ can only intersect one another at the fixed points (which lie at their endpoints $\gamma_{i,s,f}(+\infty)$ and $\gamma_{i,s,f}(-\infty)$).
    \end{enumerate}
\end{proposition}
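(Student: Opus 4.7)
The plan is to combine a local first integral for the holomorphic vector field $\dot z=iV(z)$, with $V(z):=f(z)-z$, and the global confinement provided by the well behaved hypothesis. On any simply connected subregion of $K_0$ disjoint from the fixed points of $f$, there is a Fatou coordinate $\Phi$ satisfying $\Phi'(z)\,V(z)=1$; along trajectories of $\dot z=V(z)$ this gives $\Phi(z(t))=t+c$, and along trajectories of $\dot z=iV(z)$ it gives $\Phi(z(t))=it+c$. Hence every $\ell_{i,s,f}$ appears as a vertical line in $\Phi$-coordinates and $\Real\Phi$ is a local first integral; moreover, wherever both sides are defined, $\Phi\circ f=\Phi+1$, so $f$ translates real parts of $\Phi$ by $+1$ and preserves imaginary parts.

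For (1), the well behaved hypothesis forces $\gamma_{i,s,f}$ back into $D_{r_0/2}$ for $|t|$ large, so both limit sets are nonempty and compact. The holomorphic first integral rules out limit cycles and nontrivial minimal sets, and hence forces each limit set into the zero locus of $V$; connectedness of limit sets then collapses the limit set to a single fixed point. For (2), near any fixed point $\sigma$ of $f$ in $K_0$ one analyzes the linearization of $iV$: at a simple zero of $V$ the eigenvalue is the nonzero number $i(f'(\sigma)-1)$ and a local stable/unstable pair of trajectories exists; at higher-order zeros (for $f=f_0$ the central $(q{+}1)$-fold point) the standard flower description produces incoming trajectories in each sector. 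Tracing these outward through $K_0\setminus D_{r_0/2}$ and invoking well behavedness, each fixed point is identified as the limit of some $\gamma_{i,s,f}$. For (3), the pairing is first checked on the unperturbed germ $f_0$, where the $q$-fold rotational symmetry of the flower together with the specific placement of the $z_{i,\pm}$ on attracting and repelling directions yields exactly the stated cyclic matching with all limits equal to $0$. For general $f\in\WB$, continuity of trajectories in the compact-open topology transports the pairing: the portion of each $\gamma_{i,s,f}$ in the compact annulus $K_0\setminus D_{r_0/2}$ depends continuously on $f$, while the discrete assignment ``which fixed point is the limit'' is locally constant on $\WB$.

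Claims (4) and (5) are then essentially structural. Claim (4) follows from (3): the union of two trajectories with coinciding endpoints closes to a Jordan curve, and the degenerate circle case corresponds exactly to a multiple fixed point at a single coinciding endpoint. For (5), the semiconjugacy $\Phi\circ f=\Phi+1$ immediately gives $\ell_{i,s,f}\cap f(\ell_{i,s,f})=\varnothing$: if $\Real\Phi\equiv c$ on the interior of $\ell_{i,s,f}$, then $\Real\Phi\equiv c+1$ on the interior of $f(\ell_{i,s,f})$. The two curves share only their endpoint fixed points (which $f$ fixes), Jordan's theorem produces the bounded domain $S_{i,s,f}$, and different $(i,s)$ give domains meeting only at fixed points by the preceding disjointness. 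The main obstacle will be the rigorous justification of claim (3) when $f$ is a genuine perturbation of $f_0$ producing several simple fixed points: in that case $\Phi$ is genuinely multi-valued around each new fixed point, trajectories can thread narrow channels between them, and one must carefully argue that the pairing data is locally constant on the connected set $\WB$ and hence determined by its value at the unperturbed germ.
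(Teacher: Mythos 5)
The paper does not prove Proposition~\ref{prop:combinatorics}; it is quoted from Oudkerk's thesis (his Proposition~2.3.2) and used as a black-box input, so there is no in-paper argument to compare against. Your sketch must therefore stand on its own, and while the toolkit is right (the holomorphic first integral $\Phi$ with $\Phi'V=1$, ruling out periodic orbits because $\Real\Phi$ is a nonconstant harmonic first integral on a simply connected domain avoiding fixed points, and the semiconjugacy $\Phi\circ f=\Phi+1$ for the disjointness in (5)), two steps do not close.

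First, in (2) you invoke a ``local stable/unstable pair of trajectories'' at a simple zero of $V$. Holomorphic vector fields have no saddles: the linearization $i\bigl(f'(\sigma)-1\bigr)(z-\sigma)$ of $iV$ yields a spiral source, a spiral sink, or a center, and in the spiral cases an \emph{open set} of nearby trajectories converges to $\sigma$, not a distinguished pair. That each fixed point is the limit of one of the \emph{specific} curves $\gamma_{i,s,f}$ therefore cannot come from linearization alone; it requires a global counting argument that the $2q$ confined, pairwise-disjoint integral curves exhaust all the fixed points in $K_0$. Second, your plan for (3) --- verify at $f_0$ and transport by continuity --- founders at the base point: for $f_0$ all $2q$ trajectory limits collapse to the single multiple fixed point $0$, so the pairings $\gamma_{i,-,f}(\pm\infty)=\gamma_{i,+,f}(\pm\infty)$, etc.\ hold vacuously and carry no discrete combinatorial data to transport. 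The pairing for a genuine perturbation has to be extracted from the cyclic order in which the $\gamma_{i,s,f}$ cross $\partial D_{r_0/2}$ (which \emph{is} continuously controlled by proximity to $f_0$ in the compact annulus), combined with the fact that disjoint integral curves trapped inside the disc and terminating on fixed points force cyclically adjacent arcs to share an endpoint. You gesture at this in your final paragraph, but the proposed ``locally constant on $\WB$'' shortcut is not a substitute; it would itself require knowing $\WB$ is connected, and in any case the configuration of limiting fixed points changes as one moves between the strata $\WB(G)$, so the relevant invariant must be formulated purely combinatorially, not as an assignment of trajectories to particular fixed points.
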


We set $S'_{i,s,f} := S_{i,s,f}\setminus\{\gamma_{i,s,f}(+\infty), \gamma_{i,s,f}(-\infty)\}$. We call these the \emph{fundamental regions} for $f$.

Notice that if $\sigma = \gamma_{i,s,f}(+\infty)$ (for some $i\in\Z/q\Z$ and $s\in\{+,-\}$) then either $\sigma$ is a multiple fixed point, or $\Imag f'(\sigma) >0$ and \enquote{the dynamics of $f$ rotate anti-clockwise around $\sigma$,} i.e.\ $\sigma$ is a sink for the vector field $\dot z = i[f(z) - z]$ (as per \cite[Corollary~3.3.3]{oudkerk}). Similarly, if $\sigma = \gamma_{i,s,f}(-\infty)$ then $\sigma$ is either a multiple fixed point, or $\Imag f'(\sigma) < 0$ and \enquote{the dynamics of $f$ rotate clockwise around $\sigma$,} i.e.\ $\sigma$ is a source for the vector field $\dot z = i[f(z) - z]$.

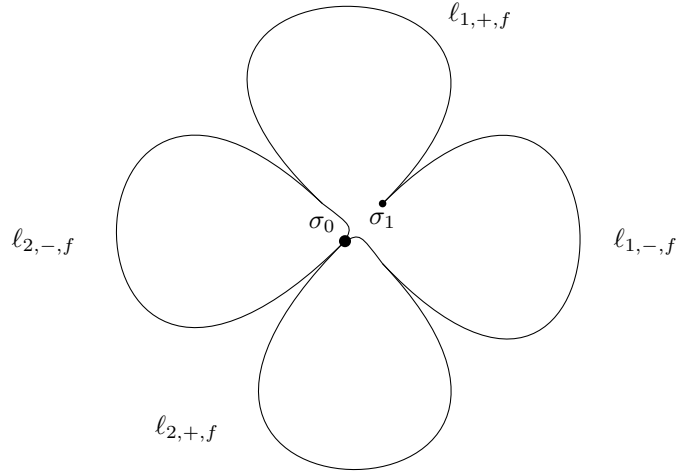
\begin{figure}
    \begin{tikzpicture}
        \draw (0.5,0.5) .. controls (0.6,0.7) .. (0.2,1) .. controls (-3.5,4.5) and (4.5,4.5) .. (1,1);
        \node at (4.5,0.5) {$\ell_{1,-,f}$};
        \draw (0.5,0.5) .. controls (0.7,0.6) .. (1,0.2) .. controls (4.5,-3.5) and (4.5,4.5) .. (1,1);
        \node at (2.3,3.5) {$\ell_{1,+,f}$};
        \draw (0.2,1) .. controls (-3.5,4.5) and (-3.5,-3.5) .. (0.5,0.5);
        \node at (-3.5,0.5) {$\ell_{2,-,f}$};
        \draw (1,0.2) .. controls (4.5,-3.5) and (-3.5,-3.5) .. (0.5,0.5);
        \node at (-1.6,-2) {$\ell_{2,+,f}$};
        \fill (0.5,0.5) circle (0.08) node[anchor=south east] {$\sigma_0$};
        \fill (1,1) circle (0.05) node[anchor=north] {$\sigma_1$};
    \end{tikzpicture}
    \caption{Example of a gate structure for $q=2$. $\sigma_0$ is a double fixed point while $\sigma_1$ is a simple fixed point. $\gate{f} = (1,\star)$, i.e.\ $\overline{\ell_{1,+,f}\cup\ell_{1,-,f}}$ is homeomorphic to a circle and $\overline{\ell_{2,+,f}}$ is homeomorphic to a circle.}
    \label{fig:gate structure example}
\end{figure}

\begin{definition}[Gate structure, $\gate{f}$, see {\cite[Definition 2.3.3]{oudkerk}}]\label{def:gate structure}
    For an $f\in\WB$ we form the vector $\gate{f} = (\gate[1]{f},\dots, \gate[q]{f})$ where
    \[
        \gate[i]{f} := \begin{cases}
            j & \text{if $\overline{\ell_{i,+,f}\cup\ell_{j,-,f}}$ is homeomorphic to a circle;} \\
            \star & \text{if $\overline{\ell_{i,+,f}}$ is homeomorphic to a circle.}
        \end{cases}
    \]
    This is well defined and for $f_0$ we get $\gate{f_0}=(\star,\dots,\star)$.

    The $i$th gate is said to be \emph{open} if $\gate[i]{f}\neq\star$, \emph{closed} if $\gate[i]{f}=\star$.
\end{definition}
See also \Cref{fig:gate structure example}.

\begin{definition}[\textsf{Admissible} and $\WB(G)$, see {\cite[Definition~2.3.4]{oudkerk}}]
    Note that, although every gate structure has an associated vector $G\in\{1,\dots,q,\star\}^q$, not every such vector corresponds to an admissible gate structure.

    Let us draw a circle and place along it points labeled in anti-clockwise order $(1,-)$, $(1,+)$, $(2,-)$, $(2,+)$, \dots, $(q,-)$, $(q,+)$. A vector $G = (G_1,\dots, G_q)\in\{1,\dots,q,\star\}^q$ is called an \emph{admissible} gate structure if for each $j\in Z/q\Z$ there is at most one $i\in Z/q\Z$ such that $G_i = j$, and if we can draw non-intersecting lines on the disk between each pair $(i,+)$, $(j,-)$ for which $G_i=j$.

    Let \textsf{Admissible} be the set of all such \emph{admissible} vectors and $\WB(G) := \{f\in\WB\mid \gate{f} = G\}$.
\end{definition}

Another result from Oudkerk is the following corollary enabling us to easily identify well behaved $f\in\Ncal$.

\begin{corollary}[Sufficient conditions for $f$ to be well behaved, see {\cite[Corollary 3.7.5]{oudkerk}}]\label{cor:sufficient condition}
    Suppose that $f\in\Ncal$ where $\Ncal$ is a small neighborhood of $f_0$, and that $\operatorname{Fix}(f) = \{\sigma\in K_0\mid f(\sigma) = \sigma\}$. There is a constant $M>0$ such that if for every set $X$ with $\varnothing\subsetneq X\subsetneq Fix(f)$ we have
    \[
        \left|\Imag\sum_{\sigma\in X}\ind{f}{\sigma}\right| > M,
    \]
    then $f\in\WB$.
\end{corollary}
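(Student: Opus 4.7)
The plan is to relate the trajectories of the vector field $\dot{z}=i[f(z)-z]$ to the holomorphic indices of the fixed points of $f$ inside $K_0$, then extract well-behavedness by a contradiction argument. First I would record the local picture: at a simple fixed point $\sigma$ of $f$ the vector field linearizes as $\dot{z}\approx i(f'(\sigma)-1)(z-\sigma)$, so the real part of its eigenvalue equals $-\Imag f'(\sigma)$. A direct computation shows that $\Imag\ind{f}{\sigma}=\Imag\bigl[1/(1-f'(\sigma))\bigr]$ has the same sign as $\Imag f'(\sigma)$, so $\sigma$ is a sink (resp.\ source) for the flow exactly when $\Imag\ind{f}{\sigma}>0$ (resp.\ $<0$), consistent with the remark after \Cref{prop:combinatorics}.

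Next I would invoke the standard residue calculation for the meromorphic $1$-form $\omega:=dz/\bigl[i(f(z)-z)\bigr]$. At each simple fixed point $\sigma$ this form has a simple pole with residue $-i\,\ind{f}{\sigma}$, and $\omega$ is precisely the differential of complex time along the flow. Consequently, for any Jordan curve $\Gamma\subset K_0$ avoiding $\operatorname{Fix}(f)$ and enclosing the subset $X\subsetneq\operatorname{Fix}(f)$,
\[
    \oint_\Gamma \omega \;=\; 2\pi\sum_{\sigma\in X}\ind{f}{\sigma}.
\]
The imaginary part of this integral measures the total vertical time the flow accumulates along $\Gamma$, and the hypothesis $|\Imag\sum_{\sigma\in X}\ind{f}{\sigma}|>M$ then forces every such Jordan curve to enclose a very long vertical time.

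Now the argument proper: suppose for contradiction that some $f\in\Ncal$ satisfying the index hypothesis is not well behaved. Then one of the trajectories $\gamma_{i,s,f}$ violates the inclusion requirements of the \WB\ definition, for instance it enters $D_{r_0/2}$ and later leaves it, or fails to converge. In either case one can close up a compact arc of $\gamma_{i,s,f}$ with a short piece of $\partial D_{r_0/2}$ and (possibly) an arc of another trajectory to form a Jordan curve $\Gamma\subset K_0\setminus D_{r_0/4}$ that encloses a nonempty proper subset $X\subsetneq\operatorname{Fix}(f)$. The defining flow equation $dt=\omega$ turns the integral $\oint_\Gamma\omega$ into the total (complex) time spent on $\Gamma$; but on $K_0\setminus D_{r_0/4}$ the integrand is uniformly bounded by a constant depending only on $r_0$, $K_0$ and $\Ncal$ (this is where the compactness of $K_0$ and the smallness of $\Ncal$ are used). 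Taking $M$ to be this bound times $\operatorname{length}(\Gamma)/2\pi$ gives the desired contradiction with $|\Imag\sum_{\sigma\in X}\ind{f}{\sigma}|>M$.

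The main obstacles are twofold. First, one has to choose the Jordan curve $\Gamma$ and the enclosed subset $X$ cleanly, avoiding degenerate configurations where fixed points coalesce on $\Gamma$ or where $X$ is empty or all of $\operatorname{Fix}(f)$; this is exactly why the index hypothesis is quantified only over proper nonempty subsets. Second, the constant $M$ must be extracted uniformly over $\Ncal$, which requires careful compactness estimates on $\omega$ away from $D_{r_0/4}$; once these are in place, the residue computation packages everything into a single contradiction and one recovers $f\in\WB$.
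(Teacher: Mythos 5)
This corollary is stated in the paper without proof; it is cited verbatim from Oudkerk's thesis (Corollary~3.7.5 of \cite{oudkerk}), so there is no in-paper proof to compare against. I will therefore evaluate your argument on its own terms.

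Your framing via the residues of $\omega = dz/\bigl[i(f(z)-z)\bigr]$ is the right idea, and the sign computations (residue at $\sigma$ equals $i\,\ind{f}{\sigma}$, not $-i\,\ind{f}{\sigma}$, but this is immaterial for an estimate on $|\Imag(\cdot)|$) are essentially harmless. The genuine gap is in the construction of the Jordan curve $\Gamma$. You claim to build a Jordan curve $\Gamma\subset K_0\setminus D_{r_0/4}$ enclosing a nonempty proper subset $X\subsetneq\operatorname{Fix}(f)$. This is impossible: for $\Ncal$ a small neighborhood of $f_0$, all fixed points of $f\in\Ncal$ lying in $K_0$ are contained in a small disk about $0$, certainly inside $D_{r_0/4}$. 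Any Jordan curve that avoids $D_{r_0/4}$ then either encircles $D_{r_0/4}$ (enclosing \emph{all} of $\operatorname{Fix}(f)$) or does not (enclosing \emph{none} of $\operatorname{Fix}(f)$); in neither case does it isolate a nonempty proper subset. Your argument also needs a uniform bound on $|\omega|$ along $\Gamma$, which cannot hold if $\Gamma$ instead ventures near the fixed points, where $\omega$ has poles.

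The missing observation that repairs this is that along any trajectory of the field $\dot z = i[f(z)-z]$, the form $\omega$ is exactly the differential of the (real) flow time, so $\Imag\int_\gamma\omega = 0$ on trajectory arcs. One should therefore build $\Gamma$ from long trajectory arcs --- which \emph{do} pass arbitrarily close to the fixed points, and hence \emph{can} separate them --- together with a few short connecting arcs taken in the annulus $K_0\setminus D_{r_0/2}$, where the hypothesis that $f$ is close to $f_0$ keeps $|\omega|$ uniformly bounded. The trajectory pieces contribute nothing to $\Imag\oint_\Gamma\omega$, so the entire imaginary part of $2\pi\sum_{\sigma\in X}\ind{f}{\sigma}$ is controlled by the short connecting arcs, giving a constant $M$ depending only on $r_0$, $K_0$ and $\Ncal$. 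Without this reduction of $\Imag\oint_\Gamma\omega$ to the non-trajectory pieces, the length-times-supremum estimate you invoke is not available, and the contradiction does not close.
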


\begin{theorem}[Existence and continuity of Fatou coordinates, see {\cite[Theorem~2.3.12]{oudkerk}}]\label{the:existence fatou coords}
    Let $f\in\WB$, $i\in\Z/q\Z$ and $s\in\{+,-\}$.
    \begin{enumerate}
        \item There exists an analytic univalent map $\Phi_{i,s,f}$ defined in a neighborhood of $S'_{i,s,f}$, satisfying
        \begin{align*}
            \Phi_{i,s,f}(f(z)) = \Phi_{i,s,f}(z) + 1 && \text{if $z\in\ell_{i,s,f}$}.
        \end{align*}
        This is unique up to addition by a constant. We call this a \emph{Fatou coordinate} of $f$.
        \item The \emph{Écalle cylinder} $\mathcal{C}_{i,s,f} = S'_{i,s,f}/f$, obtained by identifying $z$ and $f(z)$ for all $z\in\ell_{i,s,f}$, is conformally isomorphic to the cylinder $\C/\Z$ via $[z]_f\mapsto [\Phi_{i,s,f}(z)]_\Z$.
        \item\label{the:ex fatou coords:cont of S} For each $G\in\textsf{Admissible}$, the map $f\mapsto S_{i,s,f}$ is Hausdorff continuous on $\WB(G)$, but only Hausdorff lower semi-continuous on $\WB$ (using the compact-open topology).
        \item There is a normalization of the Fatou coordinates such that $f\mapsto (\Phi_{i,s,f}:S'_{i,s,f}\to\C$ is continuous on $\WB$ (using the compact-open topology on both sides).
    \end{enumerate}
\end{theorem}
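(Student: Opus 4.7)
The plan is to establish parts (1) and (2) by uniformizing the abstract quotient Riemann surface $S'_{i,s,f}/f$, then to deduce parts (3) and (4) from continuous dependence of the solutions of the vector field $\dot z = i[f(z) - z]$ on $f$.

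For existence of the Fatou coordinate I would first form the topological quotient $\mathcal{C}_{i,s,f} := S'_{i,s,f}/\!\sim$, where $z \sim f(z)$ for $z\in\ell_{i,s,f}$. Since $f$ is a local biholomorphism along $\ell_{i,s,f}$ (we stay bounded away from the endpoint fixed points by restricting to $S'$), the quotient inherits a complex structure making it a Riemann surface topologically equivalent to a cylinder. By uniformization such a cylinder is conformally isomorphic to one of $\C/\Z$, $\D^*$, or a round annulus; to force the parabolic case I would analyze $f$ near the two endpoint fixed points $\sigma_\pm = \gamma_{i,s,f}(\pm\infty)$. By the discussion following \Cref{prop:combinatorics}, $\sigma_+$ is a sink and $\sigma_-$ is a source for the vector field (or one or both is a multiple fixed point), so in either case the first-return map of $f$ to a short transversal through $S'_{i,s,f}$ near $\sigma_\pm$ has infinite order, producing a puncture rather than a closed boundary circle. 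The Fatou coordinate $\Phi_{i,s,f}$ is then obtained by lifting the uniformizing isomorphism $\mathcal{C}_{i,s,f}\to\C/\Z$ to the universal cover $\C$ and precomposing with the inclusion $S'_{i,s,f}\hookrightarrow\mathcal{C}_{i,s,f}$. Uniqueness up to an additive constant is automatic: two such lifts differ by a deck transformation $w\mapsto w+n$ composed with the freedom to choose a basepoint of the uniformization, together amounting to an arbitrary complex constant. This construction directly yields (2).

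For parts (3) and (4) I would use that $f\mapsto\gamma_{i,s,f}$ is continuous in the compact-open topology by standard continuous-dependence results for ODEs applied to $\dot z = i[f(z)-z]$; hence so are $\ell_{i,s,f}$ and $S_{i,s,f}$, provided the combinatorial structure $\gate{f}$ is held fixed. This gives Hausdorff continuity of $f \mapsto S_{i,s,f}$ on $\WB(G)$. Across strata one has only lower semi-continuity, since an opening or closing gate can discontinuously enlarge the region through which a trajectory winds before meeting its combinatorial partner. Part (4) is then a corollary of (3) together with Carath\'eodory kernel convergence of the uniformizing Riemann map on varying domains: imposing a continuous normalization such as $\Phi_{i,s,f}(z_{i,s})=0$ pins down the additive constant and propagates continuity to $f\mapsto \Phi_{i,s,f}$.

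The hard part will be proving parabolicity of $\mathcal{C}_{i,s,f}$ uniformly across simple and multiple endpoint fixed points. For a simple fixed point $\sigma$ with $\Imag f'(\sigma)\neq 0$ one can linearize and the transversal crossing times grow logarithmically in the distance to $\sigma$, producing a cusp. For multiple fixed points (which appear exactly when $f$ coincides with $f_0$ at an endpoint) the escape is only power-law, and one must recover the classical parabolic Fatou-coordinate asymptotics from a careful normal-form analysis. The corresponding uniform estimates, together with the behavior as $f\to f_0$ that underpins (4), are precisely the technical core of \cite{oudkerk}.
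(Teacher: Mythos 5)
The paper does not prove this statement: it is quoted verbatim from \cite[Theorem~2.3.12]{oudkerk} and used as a black box in \Cref{sec:oudkerk}, so there is no in-paper argument to compare against, and what follows is a critique of your reconstruction on its own terms. For parts (1)--(2) your outline has the right overall shape. A small slip: $\ell_{i,s,f}$ is \emph{not} bounded away from the endpoint fixed points --- the trajectory accumulates on $\gamma_{i,s,f}(\pm\infty)$ --- but the gluing is nonetheless fine because $f$ is a local biholomorphism at those points, whether they are simple or multiple. Your observation that the ends are punctures even at a simple fixed point $\sigma$ with $\Imag f'(\sigma)\neq0$ is correct and is the crucial step; one way to see it is that in logarithmic coordinates centered at $\sigma$ the end of $S'_{i,s,f}$ becomes a half-infinite strip of width $|\log f'(\sigma)|$ and the identification by $f$ turns it into a half-infinite cylinder.

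The step I would push back on is deducing (4) from (3) via Carath\'eodory kernel convergence. Carath\'eodory controls normalized Riemann maps of simply-connected domains onto $\D$; here the uniformizers are conformal isomorphisms to the infinite cylinder $\C/\Z$, whose automorphism group contains every translation, so a pointwise normalization $\Phi_{i,s,f}(z_{i,s})=0$ does not by itself yield continuity as the domain $S'_{i,s,f}$ deforms near its two cusps --- the limit of the uniformizing maps could a priori drift by an unbounded amount along the cylinder even while the domains converge in the Hausdorff sense. Oudkerk's actual route (following Shishikura) is constructive: one builds an approximate Fatou coordinate by integrating $1/(f(z)-z)$, the time function of the auxiliary vector field, proves a priori bounds on the correction needed to satisfy the exact functional equation, and reads continuity in $f$ directly off these estimates. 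That is also precisely what makes the preferred normalization appearing in \Cref{the:formula lph} explicit in terms of the $\jmath$-indices of the fixed points. The abstract uniformization route gives existence and uniqueness cleanly, but as written it leaves a genuine gap in your argument for (4).
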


There is a \enquote{preferred normalization} of the Fatou coordinates which satisfies \Cref{the:existence fatou coords} (4), see \Cref{the:formula lph}.

\begin{definition}[The sets $U_{i,s,f}$, see {\cite[Definition~2.3.13]{oudkerk}}]
    Let $f\in\WB$. Then if we have $\gate[i]{f}=j\neq\star$ we let $U_{i,+,f} = U_{j,-,f}$ be the open Jordan domains bounded by the closure of $f^{\circ-2}(\ell_{i,+,f})\cup f^{\circ2}(\ell_{j,-,f})$.

    If $\gate[i]{f} = \star$ then we let $U_{i,+,f}$ be the open set bounded by the closure of $f^{\circ-2}(\ell_{i,+f})$.

    And if $\gate[k]{f}\neq j$ for each $k\in\Z/q\Z$ then we let $U_{j,-,f}$ be the open set bounded by the closure of $f^{\circ2}(\ell_{j,-,f})$.

    Notice that we will always have $S'_{i,s,f}\subset U_{i,s,f}$, and that for all $i,j\in\Z/q\Z$ we have $U_{i,+,f}=U_{j,-f}$ if and only if $\gate[i]{f} = j$.
\end{definition}

\begin{proposition}[Extending $\Phi_{i,s,f}$ to $U_{i,s,f}$, see {\cite[Proposition~2.3.14]{oudkerk}}]
    Let $f\in\WB$, $i\in\Z/q\Z$ and $s\in\{+,-\}$.
    \begin{enumerate}
        \item We can extend the Fatou coordinate (defined on $S'_{i,s,f}$) to give an analytic map $\Phi_{i,s,f}:U_{i,s,f}\to\C$ satisfying
        \begin{align*}
            \Phi_{i,s,f}(f(z)) = \Phi_{i,s,f}(z) + 1 && \text{if $z,f(z)\in U_{i,s,f}$},
        \end{align*}
        and $\Phi_{i,s,f}$ is unique up to addition by a constant. Also we have $U_{i,s,f}/f = S'_{i,s,f}/f = \mathcal{C}_{i,s,f}$.
        \item $f\mapsto (\Phi_{i,s,f}:U_{i,s,f}\to\C)$ and $f\mapsto\overline{U_{i,s,f}}$ are continuous on $\WB(G)$ for each $G\in\textsf{Admissible}$. (However, neither is continuous on $\WB$, and $f\mapsto\overline{U_{i,s,f}}$ is not even lower semi-continuous.)
    \end{enumerate}
\end{proposition}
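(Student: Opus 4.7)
The plan is to extend $\Phi_{i,s,f}$ from the fundamental region $S'_{i,s,f}$ to $U_{i,s,f}$ by iteratively applying the functional equation $\Phi_{i,s,f}(f(z)) = \Phi_{i,s,f}(z) + 1$, exploiting the fact that the boundary curves of $U_{i,s,f}$ are obtained from boundary curves of one or two fundamental regions by only finitely many iterates of $f$ or $f^{-1}$.

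First I would enlarge the domain one step at a time. Starting from $S'_{i,s,f}$, I adjoin the image $f(S'_{i,s,f})$ (on which I set $\Phi_{i,s,f}(f(z)) := \Phi_{i,s,f}(z)+1$) and the preimage $f^{-1}(S'_{i,s,f})$ taken via the univalent branch of $f^{-1}$ along $\ell_{i,s,f}$ (on which I set $\Phi_{i,s,f}(w) := \Phi_{i,s,f}(f(w))-1$). Consistency on the overlaps along $\ell_{i,s,f}$ and $f(\ell_{i,s,f})$ is guaranteed by the functional equation of Theorem~\ref{the:existence fatou coords}(1), and the extension remains analytic and locally injective because $f$ is univalent in a neighborhood of the boundary curves, which all lie in the compact set $K_0$. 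Iterating this enlargement a bounded number of times — and, in the open-gate case $\gate[i]{f}=j$, also incorporating the neighboring region $S'_{j,-,f}$ via iterates that traverse the gate — eventually exhausts $U_{i,s,f}$, since its boundary is built from $f^{\circ\pm 2}$ of boundary curves of fundamental regions.

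Well-definedness is then formal: every $z \in U_{i,s,f}$ has $f^{\circ k}(z) \in S'_{i,s,f}$ for some small integer $k$, and the functional equation guarantees that $\Phi_{i,s,f}(z) := \Phi_{i,s,f}(f^{\circ k}(z)) - k$ is independent of the choice of $k$, whence also $U_{i,s,f}/f = S'_{i,s,f}/f = \mathcal{C}_{i,s,f}$. Uniqueness of $\Phi_{i,s,f}$ up to an additive constant reduces to the known uniqueness on $S'_{i,s,f}$, because the constant is propagated unchanged through each extension step by the functional equation.

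For the continuity assertion in part (2), Theorem~\ref{the:existence fatou coords}(4) already gives continuity of $f\mapsto(\Phi_{i,s,f}:S'_{i,s,f}\to\C)$ on all of $\WB$, and Theorem~\ref{the:existence fatou coords}(3) gives Hausdorff continuity of $f\mapsto\overline{S_{i,s,f}}$ on each stratum $\WB(G)$. Within a single stratum the gate combinatorics are fixed, so the finitely many univalent branches of $f^{-1}$ used in the construction vary continuously with $f$ in the compact-open topology, and hence both $f\mapsto\overline{U_{i,s,f}}$ and $f\mapsto(\Phi_{i,s,f}:U_{i,s,f}\to\C)$ inherit continuity on $\WB(G)$. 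The main obstacle, and the reason we cannot extend past a single stratum, is precisely the combinatorial change at the boundary between different $\WB(G)$'s: when a gate opens or closes, $U_{i,s,f}$ discontinuously gains or loses a whole neighboring fundamental region, which is exactly the failure of even lower semi-continuity of $f\mapsto\overline{U_{i,s,f}}$ flagged in the statement.
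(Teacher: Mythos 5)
The paper does not prove this proposition at all: it is stated as a citation to Oudkerk's thesis, \cite[Proposition~2.3.14]{oudkerk}, and no argument is given in the present text. So there is no ``paper's own proof'' to compare against; your attempt is a reconstruction of a result that the authors treat as an external black box.

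As a reconstruction, the overall strategy --- propagate the Fatou coordinate from $S'_{i,s,f}$ across $U_{i,s,f}$ by the functional equation, using that $S'_{i,s,f}$ is a fundamental domain for $f$ acting on $U_{i,s,f}$ --- is the right idea, and your continuity discussion for part~(2) correctly identifies the stratum-by-stratum nature of the statement and why the gate combinatorics obstruct global continuity. However, there is a real quantitative error in the extension step. You claim that iterating the one-step enlargement ``a bounded number of times'' exhausts $U_{i,s,f}$, and that every $z\in U_{i,s,f}$ satisfies $f^{\circ k}(z)\in S'_{i,s,f}$ for ``some small integer $k$.'' This is false in the open-gate case: $U_{i,+,f}=U_{j,-,f}$ is a long Jordan strip spanning from $f^{\circ -2}(\ell_{i,+,f})$ all the way through the gate to $f^{\circ 2}(\ell_{j,-,f})$, and the number of iterates needed to bring a point from the middle of that strip back into $S'_{i,+,f}$ is on the order of the transit time $n_f$ of Proposition~\ref{prop:central proposition}, which diverges as $f\to f_0$ (indeed $\operatorname{Re}\lph{i}{f}\to -\infty$). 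The extension and well-definedness still go through because $U_{i,s,f}/f = S'_{i,s,f}/f$ makes $S'_{i,s,f}$ a genuine fundamental domain regardless of how many iterates are involved, but the ``bounded number of times'' phrasing obscures this and would need to be replaced by an argument that, for each fixed $f$, every $f$-orbit staying in $U_{i,s,f}$ meets $S'_{i,s,f}$ exactly once, and that the resulting value $\Phi_{i,s,f}(f^{\circ k}(z))-k$ is independent of $k$ because two visits of an orbit to $\overline{S'_{i,s,f}}$ are related precisely by the identification $z\sim f(z)$ along $\ell_{i,s,f}$ that defines the \'Ecalle cylinder.
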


\begin{definition}[Lifted phase, $\lph{i}{f}$, see {\cite[Definition~2.4.1]{oudkerk}}]
    Recall that if $f\in\WB(G)$ and $G_i=j\neq\star$ then $\Phi_{i,+,f}$ and $\Phi_{j,-,f}$ are both defined on $U_{i,+,f}=U_{j,-,f}$ and differ by a constant (since Fatou coordinates are unique up to addition by a constant).

    Therefore the \emph{lifted phase for the $i$th gate}
    \[
        \lph{i}{f} := \begin{cases}
            \Phi_{j,-,f} - \Phi_{i,+,f} & \text{if $j = G_i \neq\star$,} \\
            \infty & \text{if $G_i=\star$},
        \end{cases}
    \]
    is well defined.
\end{definition}

From this, the next proposition almost immediately follows by definition, which is why it is only presented as a remark by Oudkerk.

\begin{proposition}[see {\cite[Remark~2.4.2]{oudkerk}}]\label{prop:central proposition}
    Suppose that $f\in\WB$ and $G_i=j\neq\star$. For each $z\in S'_{i,+,f}$ there is a \emph{smallest} positive integer $n_f$ such that $f^{\circ n}(z)\in U_{i,+,f}$ for all $n\in\{0,\dots,n_f\}$ and $f^{\circ n_f}(z)\in S'_{j,-,f}$. Also
    \[
        \Phi_{j,-,f}\left(f^{\circ n_f}(z)\right) = \Phi_{i,+,f}(z) + \lph{i}{f} + n_f.
    \]
\end{proposition}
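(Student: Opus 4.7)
The plan is to reduce the statement to a direct computation in the Fatou coordinate, once we have located where the orbit of $z$ first enters $S'_{j,-,f}$ without leaving $U_{i,+,f}$. The crucial ingredient is the preceding proposition: on the common domain $U_{i,+,f} = U_{j,-,f}$ both Fatou coordinates $\Phi_{i,+,f}$ and $\Phi_{j,-,f}$ are analytic univalent maps conjugating $f$ to the unit translation, and by the definition of the lifted phase they differ by the constant $\lph{i}{f} = \Phi_{j,-,f} - \Phi_{i,+,f}$.

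First I would identify the candidate for $n_f$ using the Écalle cylinder. By \Cref{the:existence fatou coords}(2) the cylinder $\mathcal{C}_{i,+,f} = U_{i,+,f}/f$ is conformally isomorphic to $\C/\Z$, and both $S'_{i,+,f}$ and $S'_{j,-,f}$ project onto it as fundamental domains. Hence the $f$-orbit of $z\in S'_{i,+,f}$ meets $S'_{j,-,f}$ in a unique representative $w$, with $w = f^{\circ m}(z)$ for a well-defined integer $m$. Because the boundary arc $f^{\circ -2}(\ell_{i,+,f})$ of $U_{i,+,f}$ lies dynamically behind $S'_{i,+,f}$ while $f^{\circ 2}(\ell_{j,-,f})$ lies dynamically ahead of $S'_{j,-,f}$, the integer $m$ must be positive, and I set $n_f := m$. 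I would then verify by induction on $n$ that $f^{\circ n}(z)\in U_{i,+,f}$ for every $n\in\{0,\dots,n_f\}$: the inductive step uses that applying $f$ to a point of $U_{i,+,f}$ whose orbit has not yet reached $w$ still produces a point of $U_{i,+,f}$, since otherwise the image would have to cross the outer boundary $f^{\circ 2}(\ell_{j,-,f})$, contradicting the choice of $w\in S'_{j,-,f}$ as the \emph{first} visit. Minimality of $n_f$ is then immediate from the fact that $S'_{j,-,f}$ is a fundamental domain for the $f$-action, so no smaller positive power of $z$ can land there.

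Finally, the displayed formula falls out mechanically: iterating the functional equation $\Phi_{i,+,f}(f(\cdot)) = \Phi_{i,+,f}(\cdot) + 1$ along the orbit kept inside $U_{i,+,f}$ gives $\Phi_{i,+,f}(f^{\circ n_f}(z)) = \Phi_{i,+,f}(z) + n_f$, and substituting $\Phi_{j,-,f} = \Phi_{i,+,f} + \lph{i}{f}$, which is valid on $U_{i,+,f} = U_{j,-,f}$, yields the claim. I expect the main obstacle to be the geometric step verifying that the orbit does not exit $U_{i,+,f}$ prematurely: the definition of $U_{i,+,f}$ via two pre-images and two images of the sector boundaries $\ell_{i,+,f}$ and $\ell_{j,-,f}$ is precisely engineered to make the gate through which the orbit passes fit inside the Jordan domain, but one has to unpack this geometry carefully. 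Once that fact is in place, the remainder of the proof is essentially bookkeeping with Fatou coordinates, which is why Oudkerk records the statement as a remark rather than a proposition.
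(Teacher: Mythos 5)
Your proof is correct. The paper itself does not supply an argument for this proposition---it cites Oudkerk and notes only that it ``almost immediately follows by definition'' of the lifted phase---and your write-up fills in precisely the details left implicit there: identifying the transit time through the shared \'Ecalle cylinder $U_{i,+,f}/f = U_{j,-,f}/f$, checking that the orbit cannot leave $U_{i,+,f}$ before first entering $S'_{j,-,f}$, and then combining the functional equation $\Phi(f(\cdot)) = \Phi(\cdot)+1$ with the defining relation $\Phi_{j,-,f}-\Phi_{i,+,f}=\lph{i}{f}$.
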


\begin{definition}[$\text{Fix}^u_i(f)$ and $\text{Fix}^\ell_i(f)$, see {\cite[Definition~2.4.10]{oudkerk}}]
    Suppose that $f\in\WB$, $i\in\Z/q\Z$ and $\gate[i]{f}\neq\star$. Notice that by definition of $f$ being well behaved, $\overline{D_{r_0/2}}\setminus U_{i,+,f}$ has two components. We denote by $\text{Upper}(i,f)$ the component containing $\gamma_{i,+,f}(+\infty)$, and by $\text{Lower}(i,f)$ the component containing $\gamma_{i,+,f}(-\infty)$.

    We can then decompose the set of fixed points $\text{Fix}(f) := \{\sigma\in K_0\mid f(\sigma) = \sigma\}$ into the disjoint union $\text{Fix}^u_i(f)\sqcup\text{Fix}^\ell_i(f)$ by letting $\text{Fix}^u_i(f) := \text{Upper}(i,f)\cap\text{Fix}(f)$ and $\text{Fix}^\ell_i(f) := \text{Lower}(i,f)\cap\text{Fix}(f)$.
\end{definition}

In \cite[Definition~2.4.7]{oudkerk} the $\jind{f}{\sigma}$ for fixed points $\sigma\in K_0$ of $f\in\Ncal$ is defined. However, we only need the definition for simple fixed points, which is
\[
    \jind{f}{\sigma} := -\frac{2\pi i}{\log f'(\sigma)}
\]
where we take the branch $\Imag\log(\cdot)\in(-\pi,\pi]$ of the logarithm.

\begin{theorem}[Formula for the lifted phases, see {\cite[Theorem~2.4.11]{oudkerk}}]\label{the:formula lph}
    Suppose that we have $f\in\WB(G)$. There is a \emph{preferred normalization} of the Fatou coordinates (so that theorem \ref{the:existence fatou coords} 4. is satisfied) such that if $G_i\neq\star$ then the lifted phase of the $i$th gate is given by
    \[
        \lph{i}{f} = \begin{cases}
            + \sum_{\sigma\in\text{Fix}^u_i(f)}\jind{f}{\sigma} & \text{if $\sigma_0(f)\notin\text{Fix}^u_i(f)$;} \\
            - \sum_{\sigma\in\text{Fix}^\ell_i(f)}\jind{f}{\sigma} & \text{if $\sigma_0(f)\notin\text{Fix}^\ell_i(f)$.}
        \end{cases}
    \]
\end{theorem}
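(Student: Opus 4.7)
The plan is to interpret $\lph{i}{f}$ as a branch-monodromy computation for a ``global'' Fatou coordinate built by gluing local Fatou coordinates at each simple fixed point. Near a simple fixed point $\sigma$ with multiplier $\lambda=f'(\sigma)\neq 1$, $f$ is linearizable to $w\mapsto\lambda w$ and a local Fatou coordinate is $\Phi^{\text{loc}}_\sigma(w)=\frac{\log w}{\log\lambda}$ (with $\Imag\log\in(-\pi,\pi]$ as in the definition of $\jind{f}{\sigma}$). Going once around $\sigma$ changes $\Phi^{\text{loc}}_\sigma$ by $\frac{2\pi i}{\log\lambda}=-\jind{f}{\sigma}$; this is the geometric source of the $\jind{f}{\sigma}$ terms. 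The first step is to glue the two Fatou coordinates $\Phi_{i,+,f}$ and $\Phi_{j,-,f}$ from \Cref{the:existence fatou coords} together with the local models $\Phi^{\text{loc}}_\sigma$ (for $\sigma\in\text{Fix}(f)\setminus\{\sigma_0(f)\}$) to obtain a multi-valued holomorphic $\tilde\Phi$ on the complement of $\text{Fix}(f)$ in a neighborhood of $\sigma_0(f_0)$, satisfying $\tilde\Phi\circ f=\tilde\Phi+1$ on each simply-connected piece.

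\medskip

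For a basepoint $z_*\in S'_{i,+,f}\subset U_{i,+,f}=U_{j,-,f}$, both $\Phi_{i,+,f}(z_*)$ and $\Phi_{j,-,f}(z_*)$ are specific branches of $\tilde\Phi$ at $z_*$, so their difference $\lph{i}{f}$ equals the total monodromy picked up by continuing one branch into the other along some path. I would choose a loop $\Gamma$ at $z_*$ that encircles each fixed point in $\text{Fix}^u_i(f)$ (in the ``upper'' case) or $\text{Fix}^\ell_i(f)$ (in the ``lower'' case), whose homotopy class corresponds to passing from the attracting branch (emanating from $\gamma_{i,+,f}(+\infty)$ via forward iteration of local models) to the repelling branch (emanating from $\gamma_{j,-,f}(-\infty)$ via backward iteration). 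Each small loop around a simple $\sigma$ contributes $\pm\jind{f}{\sigma}$, with the sign determined by whether $\Gamma$ lies in $\text{Upper}(i,f)$ or $\text{Lower}(i,f)$ and by the orientation of the loop; summing yields the claimed formula.

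\medskip

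The hypothesis $\sigma_0(f)\notin\text{Fix}^u_i(f)$ (respectively $\sigma_0(f)\notin\text{Fix}^\ell_i(f)$) is precisely what permits $\Gamma$ to be chosen in the named component without encircling the multiple fixed point $\sigma_0(f)$, at which the linearization fails and the monodromy $\frac{2\pi i}{\log\lambda}$ is undefined since $\log 1=0$. The main obstacle will be pinning down the ``preferred normalization'' of \Cref{the:existence fatou coords}(4): this normalization is specified indirectly by the continuity requirement $f\mapsto\Phi_{i,s,f}$ on $\WB$, not by an explicit formula, so I must verify that it agrees with the normalization implicit in the branch computation above. Concretely, I need to show that the additive constants in $\Phi_{i,+,f}$ and $\Phi_{j,-,f}$ can be chosen to vary continuously with $f$ while making the residue-style sum an equality with no leftover constant---a combinatorial bookkeeping involving all $q$ gates simultaneously and the case split forced by where $\sigma_0(f)$ sits relative to the $i$th gate.
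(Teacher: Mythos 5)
The paper does not prove \Cref{the:formula lph}; it is quoted directly from Oudkerk's thesis (cited as \cite[Theorem~2.4.11]{oudkerk}) and used as a black box, so there is no proof in the paper to compare your proposal against.

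On its own terms, your central idea is sound in spirit: near a simple fixed point $\sigma$ with multiplier $\lambda=f'(\sigma)\neq1$, the local Fatou coordinate is a branch of $\log w/\log\lambda$ in linearizing coordinates, and its period around $\sigma$ is $2\pi i/\log\lambda=-\jind{f}{\sigma}$, so reading $\lph{i}{f}=\Phi_{j,-,f}-\Phi_{i,+,f}$ as a total branch defect accumulated through the linearization domains is the right geometric picture. But the sketch has two genuine gaps. First, you assert rather than verify that $\Phi_{i,+,f}$ and $\Phi_{j,-,f}$ are in fact analytic continuations of one another through those linearization domains in the order and on the side you claim; this needs the well-behaved combinatorics of \Cref{prop:combinatorics} and the definition of $\text{Fix}^u_i(f)$, $\text{Fix}^\ell_i(f)$ to control which fixed points the continuation passes and with what orientation, and it is exactly what produces the sign in the two cases. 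Second --- as you flag yourself --- the assertion that the residual additive constant vanishes is precisely the content of the ``preferred normalization,'' which is defined only implicitly via the continuity property in \Cref{the:existence fatou coords}(4); without reproducing Oudkerk's normalization construction, your argument establishes the formula only up to an $f$-dependent additive constant, which is not the theorem. So: a reasonable heuristic outline, but not a proof, and there is no proof in the paper with which to compare it.
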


\begin{proposition}[Equivalent convergence criteria, see {\cite[Proposition~2.4.13]{oudkerk}}]\label{prop:convergence criteria}
    Suppose that we have a sequence $(f_k)$ in $\WB(G)$ converging to $f_0$. Then the following are equivalent:
    \begin{enumerate}
        \item $\Real\lph{i}{f_k} \to -\infty$ as $k\to+\infty$ for every $i\in\Z/q\Z$ such that $G_i\neq\star$;
        \item $S_{i,s,f_k}\to S_{i,s,f_0}$ as $k\to +\infty$ for each $i\in\Z/q\Z$ and $s\in\{+,-\}$;
        \item \[
        \overline{U_{i,s,f_k}}\to\begin{cases}
            \overline{U_{i,+,f_0}}\cup\overline{U_{j,-,f_0}} & \text{if $s=+$ and $j = G_i\neq\star$,} \\
            \overline{U_{i,-,f_0}}\cup\overline{U_{j,+,f_0}} & \text{if $s=-$ and $\exists j: G_j=i$,} \\
            \overline{U_{i,s,f_0}} & \text{otherwise}
        \end{cases}
        \] as $k\to +\infty$ for each $i\in\Z/q\Z$ and $s\in\{+,-\}$;
        \item $\Phi_{i,s,f_k}:U_{i,s,f_k}\to\C$ converges to $\Phi_{i,s,f_0}:U_{i,s,f_0}\to\C$ as $k\to+\infty$ for each $i\in\Z/q\Z$ and $s\in\{+,-\}$.
    \end{enumerate}
\end{proposition}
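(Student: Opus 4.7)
The plan is to establish the four-way equivalence by completing the cycle (4) $\Rightarrow$ (3) $\Rightarrow$ (2) $\Rightarrow$ (1) $\Rightarrow$ (4). The continuity statements of \Cref{the:existence fatou coords} and the subsequent extension proposition handle convergence within a single stratum $\WB(G)$, but the essential subtlety is that the $f_k$ live in $\WB(G)$ while $f_0$ lives in $\WB((\star,\dots,\star))$, across a change of gate structure. Each of the four conditions encodes in a different language how this bifurcation collapse proceeds, and closing the cycle forces them to be equivalent.

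For (4) $\Rightarrow$ (3) recall that each $\Phi_{i,s,f_k}$ is univalent precisely on $U_{i,s,f_k}$; compact-open convergence of the Fatou coordinates, combined with a Carath\'eodory-kernel argument on univalent maps, yields the claimed Hausdorff convergence. In particular, when $G_i=j\neq\star$, the merging $\overline{U_{i,+,f_k}}=\overline{U_{j,-,f_k}}\to\overline{U_{i,+,f_0}}\cup\overline{U_{j,-,f_0}}$ reflects exactly the fact that one Fatou coordinate of $f_k$ has to bifurcate into two separate Fatou coordinates on disjoint petals of $f_0$. For (3) $\Rightarrow$ (2) use $S_{i,s,f}\subset\overline{U_{i,s,f}}$ together with the defining relation $\partial S_{i,s,f}=\overline{\ell_{i,s,f}\cup f(\ell_{i,s,f})}$: Hausdorff convergence of the $U$-closures controls the arcs $\ell_{i,s,f_k}$ at both endpoints, hence forces convergence of $S_{i,s,f_k}$.

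For (2) $\Rightarrow$ (1) apply the formula of \Cref{the:formula lph}, which writes $\lph{i}{f_k}$ as $\pm$ a sum of $\jind{f_k}{\sigma}=-2\pi i/\log f_k'(\sigma)$ over the simple fixed points of $f_k$ on one side of the $i$th gate. Convergence $S_{i,s,f_k}\to S_{i,s,f_0}$ collapses each such simple fixed point onto the parabolic point $\sigma_0(f_0)$, so $f_k'(\sigma)\to 1$ and $|\jind{f_k}{\sigma}|\to\infty$. The sink/source classification of the fixed points at the endpoints of the $\ell_{i,s,f_k}$ for the vector field $\dot z=i[f(z)-z]$ (recalled after \Cref{prop:combinatorics}) pins down the sign of $\Imag\log f_k'(\sigma)$, hence of $\Real\jind{f_k}{\sigma}$, uniformly over all terms in the relevant sum. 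A case check on the $\pm$ in \Cref{the:formula lph} shows the signs align so that every term contributes negatively, giving $\Real\lph{i}{f_k}\to-\infty$.

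The main obstacle is (1) $\Rightarrow$ (4), where one is given only the analytic divergence condition and must recover convergence of the Fatou coordinates across the change of gate structure. The strategy is a normal-families argument: univalence of $\Phi_{i,s,f_k}$ on its domain and the containment $S'_{i,s,f_k}\subset U_{i,s,f_k}$, together with the relative compactness of the fundamental regions in $K_0$, make $(\Phi_{i,s,f_k})$ normal on compact subsets of $U_{i,s,f_0}$ for large $k$. Any subsequential limit satisfies the Abel equation $\Phi\circ f_0=\Phi+1$, and the preferred normalization of \Cref{the:formula lph} pins it down as the Fatou coordinate $\Phi_{i,s,f_0}$. The hypothesis $\Real\lph{i}{f_k}\to-\infty$ is precisely what prevents the shared Fatou coordinate on $U_{i,+,f_k}=U_{j,-,f_k}$ from surviving unchanged in the limit: since $\lph{i}{f_k}$ is the additive gap between the attracting- and repelling-side normalizations, its real part going to $-\infty$ forces the two halves to drift apart by infinitely many Écalle periods, leaving two disjoint Fatou coordinates on $U_{i,+,f_0}$ and $U_{j,-,f_0}$. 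Uniqueness of Fatou coordinates up to this preferred normalization then removes subsequence dependence and upgrades the argument to full compact-open convergence, closing the cycle.
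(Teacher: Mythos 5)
The paper does not prove this proposition at all: it is stated verbatim, with the attribution \enquote{see \cite[Proposition~2.4.13]{oudkerk}}, as one of several results imported from Oudkerk's thesis. The authors use it as a black box; there is no argument in the paper for you to be compared against. So the relevant question is whether your blind proof is internally sound.

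It is not, and the gap is in the step $(2)\Rightarrow(1)$. You observe that $S_{i,s,f_k}\to S_{i,s,f_0}$ forces the simple fixed points to collapse onto $\sigma_0(f_0)$, so that $f_k'(\sigma)\to 1$ and $|\jind{f_k}{\sigma}|\to\infty$, and you pin down the sign of $\Real\jind{f_k}{\sigma}$ via the sink/source dichotomy. But $|\jind{f_k}{\sigma}|\to\infty$ together with a fixed sign of $\Real\jind{f_k}{\sigma}$ does \emph{not} give $\Real\jind{f_k}{\sigma}\to-\infty$: writing $\log f_k'(\sigma)=a_k+ib_k$ with $b_k>0$ one gets $\Real\jind{f_k}{\sigma}=-2\pi b_k/(a_k^2+b_k^2)$, which stays bounded whenever $b_k=O(a_k^2)$, even though $|\jind{f_k}{\sigma}|\sim 2\pi/|a_k|\to\infty$. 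More tellingly, every ingredient you actually invoke in this step — collapse of the fixed points, $f_k'(\sigma)\to 1$, the sink/source sign constraint, the case split in \Cref{the:formula lph} — already follows from the bare hypothesis $f_k\to f_0$ in $\WB(G)$, without (2). If your argument were complete it would show that (1) holds unconditionally for any such sequence, which would render the entire equivalence vacuous; but \Cref{the:existence fatou coords}\,(3) explicitly records that $f\mapsto S_{i,s,f}$ is only lower semi-continuous on $\WB$ across a change of gate structure, so (2) genuinely fails for some sequences. The missing content is precisely how upper semi-continuity of the fundamental regions controls the tangential direction in which $f_k'(\sigma)\to 1$, and your sketch never touches this. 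The other three implications are plausible in outline, though $(3)\Rightarrow(2)$ elides the passage from Hausdorff convergence of the closed domains $\overline{U_{i,s,f_k}}$ to convergence of the sub-arcs $\ell_{i,s,f_k}$ of their boundaries.
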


\section{Proof of the main theorem, satellite case}\label{sec:application}

In this section, we will provide a proof of our main theorem: the occurrence of $\pi$ at all satellite bifurcations in $\M$.

Let $c_0$ be a satellite bifurcation from a period $n$ hyperbolic component $H_n$ to a period $qn$ hyperbolic component $H_{qn}$. Then the map $p_{c_0}^{\circ nq}$ is affinely conjugate to $f_0(z) = z + z^{q+1} + O(z^{q+2})$ via some $\varphi_{c_0}(z) = az + b$ with $a,b\in\C$.

Furthermore, two parameter rays at angles $\vartheta_+$ and $\vartheta_-$ land at this satellite bifurcation and can be parameterized as follows (translated by $-c_0$):
\[
    \alpha_\pm:[1,\infty)\to\C,\ \ t\mapsto -c_0 + \begin{cases}
        \Phi^{-1}_\M\left(t\cdot e^{2\pi i\vartheta_\pm}\right) & t >1;\\
        c_0 & t=1.
    \end{cases}
\]

We consider certain small perturbations of the map $p_{c_0}^{\circ qn}$. While there are many conceivable perturbations, the ones that come from nearby parameters in $\M$ are written as $p_{c_0 + \alpha}^{\circ qn}$ with $\alpha\in\C$, $\alpha$ close to $0$; in particular $\alpha=\alpha_{\pm}(t)$ for $t>1$. 

Under these perturbations, the parabolic fixed point $z_0$ with multiplicity $q+1$ splits up into a periodic point of exact period $n$, called $\sigma$, and $q$ periodic points of exact period $qn$, called $\varsigma_k$ for $k=1,2,\dots,q$.

Let us define the map $f_{\alpha_\pm} := \varphi_{c_0}\circ p_{c_0 + \alpha_\pm}^{\circ qn}\circ \varphi_{c_0}^{-1}$. Then of course $f_{\alpha_\pm}\to f_0$ in the compact-open topology as $t\to 1$ or $\alpha_\pm \to 0$. With this, we can establish our first proposition.

\begin{proposition}[$f_{\alpha_\pm}$ is well behaved]\label{prop:well behaved}
    Let $f_{\alpha_\pm(t)}:\DoDef(f_{\alpha_\pm})\to\C$ as defined above. Then there exists some $t_0$ such that for all $t\in [1,t_0)$ the map $f_{\alpha_\pm(t)}$ is well behaved.
\end{proposition}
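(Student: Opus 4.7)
The plan is to verify the sufficient condition of \Cref{cor:sufficient condition}: for every proper nonempty subset $X\subsetneq\operatorname{Fix}(f_{\alpha_\pm(t)})$ we must bound $\bigl|\Imag\sum_{\sigma\in X}\ind{f}{\sigma}\bigr|>M$ once $t$ is close enough to $1$. At $t=1$ the condition is vacuous because $\operatorname{Fix}(f_0)\cap K_0=\{0\}$ is a singleton. For $t>1$, the compact-open convergence $f_{\alpha_\pm(t)}\to f_0$ together with Hurwitz's theorem confines $\operatorname{Fix}(f_{\alpha_\pm(t)})\cap K_0$ to the $q+1$ simple fixed points that bifurcate from $0$: the image $\tilde\sigma:=\varphi_{c_0}(\sigma)$ of the $p_c$-period-$n$ orbit, and $q$ points $\tilde\varsigma_1,\dots,\tilde\varsigma_q$ forming a single $p_c$-period-$qn$ orbit (and therefore sharing a common $f_\alpha$-multiplier).

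Next, I would compute the index asymptotics. Let $\lambda(c)$ and $\nu(c)$ be the holomorphic period-$n$ and period-$qn$ multipliers, with $\lambda(c_0)=e^{2\pi i p/q}$, $\nu(c_0)=1$, and nonzero derivatives $A:=\mu_n'(c_0)$, $B:=\mu_{qn}'(c_0)$. Taylor expansion gives $\lambda(c_0+\alpha)^q=1+qe^{-2\pi i p/q}A\alpha+O(\alpha^2)$ and $\nu(c_0+\alpha)=1+B\alpha+O(\alpha^2)$, hence
\[
\ind{f_\alpha}{\tilde\sigma}=-\frac{e^{2\pi i p/q}}{qA\,\alpha}+O(1),\qquad \ind{f_\alpha}{\tilde\varsigma_k}=-\frac{1}{B\,\alpha}+O(1).
\]
A proper nonempty subset $X$ is parametrized by $(\delta,m)\in\{0,1\}\times\{0,\dots,q\}\setminus\{(0,0),(1,q)\}$, recording whether $\tilde\sigma\in X$ and the number of $\tilde\varsigma_k$ in $X$. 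Its partial sum has leading term $-C(\delta,m)/\alpha$ with $C(\delta,m):=\delta e^{2\pi i p/q}/(qA)+m/B$. The holomorphic fixed point formula applied to $f_0$ forces the total $C(1,q)$ to vanish (the full sum converges to the bounded residue at $0$), i.e.\ $B=-q^2Ae^{-2\pi i p/q}$; substituting back gives $C(\delta,m)=e^{2\pi i p/q}(q\delta-m)/(q^2A)$, so every allowed $(\delta,m)$ yields $C(\delta,m)\ne 0$ with $\arg C(\delta,m)\equiv 2\pi p/q-\arg A\pmod\pi$.

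Finally, writing $\alpha_\pm(t)=r(t)e^{i\theta(t)}$ with $r(t)\to 0^+$ and $\theta(t)\to\theta_\pm$ (the landing direction of $\mathcal{R}_{\vartheta_\pm}$ at $c_0$), each partial sum satisfies
\[
\left|\Imag\sum_{\sigma\in X}\ind{f_\alpha}{\sigma}\right|=\frac{|C(\delta,m)|\,|\sin(\theta(t)-\arg C(\delta,m))|}{r(t)}+O(1),
\]
which diverges to $+\infty$ provided $\theta_\pm\not\equiv 2\pi p/q-\arg A\pmod\pi$. This directional claim is exactly the statement that at a satellite bifurcation the two parameter rays $\mathcal{R}_{\vartheta_\pm}$ land at $c_0$ along the common tangent of $\partial H_n$ and $\partial H_{qn}$, whose argument is $2\pi p/q-\arg A+\pi/2\pmod\pi$, orthogonal to the bad direction. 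With only finitely many $(\delta,m)$ to control and a uniform rate of divergence $1/r(t)$, one chooses $t_0>1$ so that every partial sum exceeds $M$ in absolute imaginary part for all $t\in[1,t_0)$; \Cref{cor:sufficient condition} then yields $f_{\alpha_\pm(t)}\in\WB$.

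The main obstacle lies in the geometric directional input of the last step: verifying that at a satellite bifurcation the two landing rays come in tangent to the mutual tangent of the two attached hyperbolic components rather than along the normal direction, and matching the tangent direction explicitly with $\arg A$. The two boundaries $\partial H_n$ and $\partial H_{qn}$ are mutually tangent at $c_0$ and the rays are pinched between them, which is a classical consequence of Douady--Hubbard's theory of external rays \cite{orsay}; cleanly comparing this landing direction with the $(\delta,m)$-independent argument $\arg A+2\pi p/q\pmod\pi$ is the step requiring the satellite combinatorics.
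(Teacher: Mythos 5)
Your proof follows the same overall route as the paper's: check the sufficient criterion from \Cref{cor:sufficient condition} by computing leading-order asymptotics of the holomorphic indices at the $q+1$ fixed points and then showing the imaginary parts of all proper partial sums blow up because the parameter ray enters $c_0$ in a direction transverse to the one bad direction. Where you genuinely differ is in how you obtain the relation $\mu_{qn}'(c_0) = -q^2\,e^{-2\pi i p/q}\,\mu_n'(c_0)$ between the two multiplier-map derivatives: the paper simply cites Guckenheimer--McGhee \cite[Theorem~A~(3)]{guckenheimer}, whereas you derive it from the holomorphic fixed point index formula --- since the total index $\sum_{\sigma}\ind{f_\alpha}{\sigma}$ over the cluster of fixed points near $0$ stays bounded as $\alpha\to 0$, the $1/\alpha$ leading terms must cancel, forcing $C(1,q)=0$. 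That is a cleaner, more self-contained derivation, and it also makes transparent why every nontrivial partial sum $C(\delta,m)$ shares the same argument mod $\pi$ (and hence why there is exactly one dangerous direction). On the geometric step both treatments are comparably informal: you phrase the needed input as ``the parameter rays land tangentially to the common tangent of $\partial H_n$ and $\partial H_{qn}$, hence not along $\arg\alpha\equiv -\arg\mu_{qn}'(c_0)\pmod\pi$,'' while the paper argues that $\mu_{qn}$ is conformal near $c_0$ and carries the mutually tangent boundaries to two analytic curves vertically tangent at $1$, so that $\Real(\mu_{qn}(c_0+\alpha)-1)/\Imag(\mu_{qn}(c_0+\alpha)-1)\to 0$ along the rays; these are equivalent statements of the same directional fact, and your identification of the bad direction with $2\pi p/q-\arg\mu_n'(c_0)\pmod\pi$ matches the paper's $-\arg\mu_{qn}'(c_0)\pmod\pi$ once the Guckenheimer relation is applied.
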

\begin{proof}
    For $t=1$ the map $f_{\alpha_\pm(1)} = f_0$ and hence is trivially well behaved. For $t>1$ we will apply \Cref{cor:sufficient condition}. To do this, we need to calculate the multipliers of the fixed points of $f_{\alpha_\pm}$ in $K_0$. These will be the $q+1$ simple fixed points described above.
    
    The multipliers of these fixed points are given by the multiplier maps $\mu_n:H_n\to\D$ and $\mu_{qn}:H_{qn}\to\D$. Let us first look at the $q$ fixed points that arise from the period $qn$ periodic point. It is well known that the multiplier map $\mu_{qn}$ can be analytically extended to some neighborhood $U$ of $c_0$. Hence, via the Taylor series expansion of $\mu_{qn}$ at $c_0$ we obtain $\mu_{qn}(c_0 + \alpha_\pm) = \mu_{qn}(c_0) + \mu_{qn}'(c_0)\alpha_\pm + O(\alpha_\pm^2)$ where $\mu_{qn}(c_0) = 1$. This sufficiently describes the multiplier of $\varsigma_k$ for the rest of the proof.

    The multiplier of the remaining fixed point $\sigma$ requires a result from \cite{guckenheimer}. Again, $\mu_n$ can be analytically extended to some neighborhood $U$ of $c_0$ such that we can write $\mu_{n}(c_0 + \alpha_\pm) = \mu_{n}(c_0) + \mu_{n}'(c_0)\alpha_\pm + O(\alpha_\pm^2)$ where $\mu_{n}(c_0)$ is some $q$th root of unity. However, the fixed point's multiplier $f'_{\alpha_\pm}(\sigma)$ will be
    \begin{align*}
        \mu_{n}(c_0 + \alpha_\pm)^q &= \left(\mu_{n}(c_0) + \mu_{n}'(c_0)\alpha_\pm + O(\alpha_\pm^2)\right)^q \\
        &= \left(\mu_{n}(c_0)\left(1 + \overline{\mu_{n}(c_0)}\mu_{n}'(c_0)\alpha_\pm + O(\alpha_\pm^2)\right)\right)^q \\
        &= \left(1 + \overline{\mu_{n}(c_0)}\mu_{n}'(c_0)\alpha_\pm + O(\alpha_\pm^2)\right)^q \\
        &= 1 + q\overline{\mu_{n}(c_0)}\mu_{n}'(c_0)\alpha_\pm + O(\alpha_\pm^2)
    \end{align*}
    and by substituting $\mu_n'(c_0) = -\frac{\mu_{nq}'(c_0)}{q^2\overline{\mu_{n}(c_0)}}$ due to \cite[Theorem~A~(3)]{guckenheimer} we obtain $f'_{\alpha_\pm}(\sigma) = 1 - \mu_{nq}'(c_0)\alpha_\pm/q + O(\alpha_\pm^2)$.

    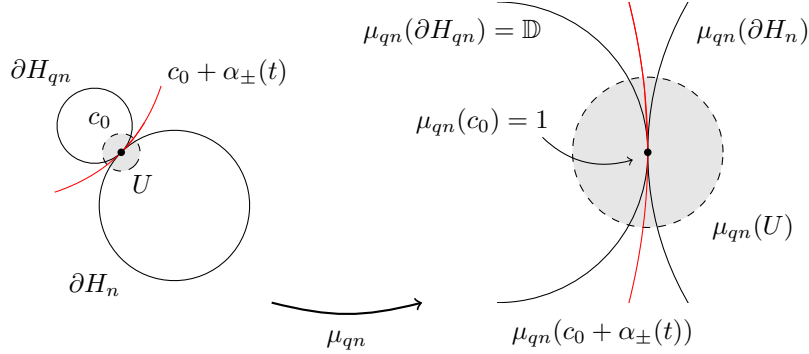
\begin{figure}
        \centering
        \begin{tikzpicture}
        \begin{scope}[shift={(-7,0)}, rotate=-45, scale=0.5]
            \filldraw[dashed, fill=gray!20!white] (0,0) circle (0.5) node at (1,-0.2) {$U$};
            \draw (-1,0) circle (1) node at (-3,0) {$\partial H_{qn}$};
            \draw (2,0) circle (2) node at (2,-3) {$\partial H_n$};
            \draw[domain=-0.5:0,smooth,variable=\x,red] plot ({\x},{sqrt(-8*\x)});
            \draw[domain=-2:1,smooth,variable=\x,red] plot ({-\x*\x/8},{\x});
            \node at (0.5,3.5) {$c_0 + \alpha_\pm(t)$};
            \fill (0,0) circle (0.1) node at (-1,0.2) {$c_0$};
        \end{scope}
        \draw[-To, thick] (-5,-2) .. controls (-4.2,-2.2) and (-3.8,-2.2) .. (-3,-2) node at (-4,-2.5) {$\mu_{qn}$};
        \begin{scope}[scale=2]
            \node at (-1.3, 0.8) {$\mu_{qn}(\partial H_{qn}) = \D$};
            \node at (-0.3, -1.2) {$\mu_{qn}(c_0 + \alpha_\pm(t))$};
            \node at (0.7, 0.8) {$\mu_{qn}(\partial H_{n})$};
            \node at (-1.1, 0.2) {$\mu_{qn}(c_0) = 1$};
            \clip (-1,-1) rectangle (1, 1);
            \filldraw[dashed, fill=gray!20!white] (0,0) circle (0.5) node at (0.7, -0.5) {$\mu_{qn}(U)$};
            \draw (-1,0) circle (1);
            \draw (2,0) circle (2);
            \draw[domain=-0.5:0,smooth,variable=\x,red] plot ({\x},{sqrt(-8*\x)});
            \draw[domain=-2:1,smooth,variable=\x,red] plot ({-\x*\x/8},{\x});
            \fill (0,0) circle (0.025);
            \draw[-To, bend right] (-0.7, 0.1) to (-0.1,-0.05); 
        \end{scope}
        \end{tikzpicture}
        \caption{A neighborhood $U$ of $c_0$ is mapped under $\mu_{qn}$.}
        \label{fig:multiplier map}
    \end{figure}

    Another important detail is that for $\alpha_\pm\to0$ (as $t\to1$) we have
    \[
        \frac{\Real(\mu_{qn}(c_0 + \alpha_\pm) - 1)}{\Imag(\mu_{qn}(c_0 + \alpha_\pm) - 1)} \to 0
    \]
    which intuitively means $|\Imag\mu_{qn}'(c_0)\alpha_\pm| \gg |\Real\mu_{qn}'(c_0)\alpha_\pm|$. Indeed, the boundaries of the two hyperbolic components $H_n$ and $H_{qn}$ are analytic and tangent to each other at $c_0$, and this property is preserved under $\mu_{qn}$, which is conformal in a neighborhood of $c_0$. In the image, the two boundaries are vertical at $\mu(c_0)=1$, so their horizontal distance decreases at least quadratically with respect to the imaginary parts (more detailed estimates can be found in \cite{guckenheimer}, but they are not required for us). 

    Let us now calculate the holomorphic indices of the fixed points. For the $q$ fixed points of the period $qn$ periodic point we get
    \begin{align*}
        \ind{f_{\alpha_\pm}}{\varsigma_k} &= \frac{1}{1- \mu_{qn}(c_0+\alpha_\pm)} = \frac{1}{1 - (1 + \mu_{qn}'(c_0)\alpha_\pm + O(\alpha_\pm^2))} \\
        &= -\frac{1}{\mu_{qn}'(c_0)\alpha_\pm(1 + O(\alpha_\pm))} = -\frac{1}{\mu_{qn}'(c_0)\alpha_\pm} + O(1)
    \end{align*}
    while for the single fixed point, we get
    \begin{align*}
        \ind{f_{\alpha_\pm}}{\sigma} &= \frac{1}{1- \mu_{n}(c_0 + \alpha_\pm)^q} = \frac{1}{1 - (1 - \mu_{qn}'(c_0)\alpha_\pm/q + O(\alpha_\pm^2))} \\
        &= \frac{q}{\mu_{qn}'(c_0)\alpha_\pm(1 + O(\alpha_\pm))} = \frac{q}{\mu_{qn}'(c_0)\alpha_\pm} + O(1).
    \end{align*}
    Now it is obvious that this satisfies the condition in \Cref{cor:sufficient condition}: we can choose some $t_0$ such that for any proper subset $X$ of these fixed points and all $t < t_0$ we satisfy
    \[
        \left|\Imag\sum_{\sigma\in X}\ind{f_{\alpha_\pm}}{\sigma}\right| > M
    \]
    for some fixed $M\in\R$. Hence, $f_{\alpha_\pm}$ is well behaved.
\end{proof}

The indices $\ind{f_{\alpha_+(t)}}{\varsigma_k}$ and $\ind{f_{\alpha_-(t)}}{\varsigma_k}$ have the property that, as $t\to 1$, their imaginary parts diverge, while their sum converges to a finite limit. Therefore, for small $t$, the imaginary parts have different signs, so we can distinguish $\alpha_+(t)$ from $\alpha_-(t)$ by requiring that we have $\Imag \ind{f_{\alpha_+}}{\varsigma_k}>0$ and $\Imag \ind{f_{\alpha_-}}{\varsigma_k}<0$ respectively. 

Now we can easily determine the gate structure of $f_{\alpha_\pm}$.
As a reminder, $\gate{f_{\alpha_+}} = (1,2,3\dots,q)$ corresponds to attracting petal $1$ being connected to repelling petal $1$ and so forth, while $\gate{f_{\alpha_-}} = (2,\dots,q,1)$ means attracting petal $1$ is connected to repelling petal $2$ and so forth and lastly attracting petal $q$ is connected to repelling petal $1$ (see \Cref{fig:gate structure}).

\begin{proposition}\label{prop:gate structure}
    $f_{\alpha_+(t)}\in\WB((1,2,3,\dots,q))$ and $f_{\alpha_-(t)}\in\WB((2,3,4,\dots,q,1))$ for $t>1$.
\end{proposition}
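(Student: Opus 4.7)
The plan is to read off the gate structure of $f_{\alpha_\pm}$ directly from the classification of its fixed points as sinks or sources of the vector field $\dot z=i[f(z)-z]$, using the sign conventions already fixed in the proof of \Cref{prop:well behaved}.

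First I would note that for any simple fixed point $\sigma$ of $f$, the holomorphic index $\ind{f}{\sigma}=1/(1-f'(\sigma))$ has imaginary part of the same sign as $\Imag f'(\sigma)$ (direct algebraic calculation: write $f'(\sigma)=1+\delta$ and compute). By the convention $\Imag\ind{f_{\alpha_+}}{\varsigma_k}>0$ adopted at the end of \Cref{prop:well behaved}, each $\varsigma_k$ satisfies $\Imag f'_{\alpha_+}(\varsigma_k)>0$ and is therefore a sink for the vector field (see the discussion following \Cref{prop:combinatorics}). Moreover, the explicit formulas in the same proof yield $\ind{f_{\alpha_+}}{\sigma}=-q\,\ind{f_{\alpha_+}}{\varsigma_k}+O(1)$, so $\Imag f'_{\alpha_+}(\sigma)<0$ for $t$ close enough to $1$, making $\sigma$ the unique source. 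Finally, since for $t>1$ all $q+1$ fixed points in $K_0$ are simple, \Cref{prop:combinatorics}(4) forbids any $\overline{\ell_{i,s,f}}$ from being a circle on its own, so every gate is open.

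Now for $\alpha_+$: by \Cref{prop:combinatorics}(1) and (3), each trajectory has a sink as its $+\infty$-endpoint and a source as its $-\infty$-endpoint, and the identifications $\gamma_{i,+}(+\infty)=\gamma_{i,-}(+\infty)$ and $\gamma_{i,-}(-\infty)=\gamma_{i-1,+}(-\infty)$ hold. Since $\sigma$ is the only source, every $-\infty$-endpoint equals $\sigma$. Writing $\gamma_{i,+}(+\infty)=\varsigma_{\phi(i)}$, the map $\phi:\Z/q\Z\to\Z/q\Z$ is a bijection by part (2) of the proposition. The condition $\gate[i]{f_{\alpha_+}}=j$ requires $\ell_{i,+}$ and $\ell_{j,-}$ to share both endpoints: the $-\infty$-endpoints are both $\sigma$ automatically, while the $+\infty$-endpoints are $\varsigma_{\phi(i)}$ and $\gamma_{j,-}(+\infty)=\gamma_{j,+}(+\infty)=\varsigma_{\phi(j)}$, which match iff $i=j$. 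Relabeling the $\varsigma_k$ so that $\phi=\mathrm{id}$, we conclude $\gate{f_{\alpha_+}}=(1,2,\dots,q)$.

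The case $\alpha_-$ is strictly analogous with sinks and sources swapped: $\sigma$ becomes the unique sink and each $\varsigma_k$ a source. Then $\gamma_{i,+}(+\infty)=\sigma$ is automatic, and with the natural labeling $\gamma_{i,+}(-\infty)=\varsigma_i$ one gets $\gamma_{j,-}(-\infty)=\gamma_{j-1,+}(-\infty)=\varsigma_{j-1}$. The pairing condition forces $\varsigma_i=\varsigma_{j-1}$, i.e.\ $j\equiv i+1\pmod q$, giving $\gate{f_{\alpha_-}}=(2,3,\dots,q,1)$. The only real conceptual issue is propagating the sign from $\Imag\ind$ to $\Imag f'$ to ``sink vs.\ source'' consistently; beyond that, the proof is bookkeeping with \Cref{prop:combinatorics}.
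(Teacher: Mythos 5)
Your proposal is correct and follows essentially the same route as the paper: classify the fixed points of $f_{\alpha_\pm}$ as sinks or sources of the vector field via the signs of $\Imag\ind{f}{\cdot}$, use \Cref{prop:combinatorics} (together with the fact that all fixed points are simple, so every gate is open) to pin down the $\pm\infty$-endpoints of the trajectories $\gamma_{i,s,f_{\alpha_\pm}}$, and read off the gate structure from the resulting endpoint identifications. You are slightly more explicit than the paper in propagating the sign from $\Imag\ind{f}{\sigma}$ to $\Imag f'(\sigma)$ and in carrying out the endpoint bookkeeping for $f_{\alpha_-}$, but the substance of the argument is identical.
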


\begin{proof}
    Because of \Cref{prop:well behaved}, $f_{\alpha_\pm}$ is well behaved and we can construct $\gamma_{i,s,f_{\alpha_\pm}}$ where $i\in\Z/q\Z$ and $s\in\{+,-\}$ as described in \Cref{sec:oudkerk}. Then by \Cref{prop:combinatorics} (1) every such trajectory $\gamma_{i,s,f_{\alpha_\pm}}(t)$ converges to one of the fixed points $\sigma$, $\varsigma_k$ as $t\to\pm\infty$. Moreover, due to \Cref{prop:combinatorics} (3) we know that for all $i\in\Z/q\Z$
    \begin{align*}
        \gamma_{i,-,f_{\alpha_\pm}}(+\infty) = \gamma_{i,+,f_{\alpha_\pm}}(+\infty) && \gamma_{i,-,f_{\alpha_\pm}}(-\infty) = \gamma_{i-1,+,f_{\alpha_\pm}}(-\infty)
    \end{align*}
    Since all fixed points are simple for $\alpha_\pm\neq0$, we also know that never $\gamma_{i,s,f_{\alpha_\pm}}(+\infty) = \gamma_{i,s,f_{\alpha_\pm}}(-\infty)$ and therefore that all gates are open.

    Let us now consider $f_{\alpha_+}$ by itself. As defined above, we have
    \begin{align*}
        \Imag \ind{f_{\alpha_+}}{\varsigma_k}>0 && \text{and therefore also} && \Imag \ind{f_{\alpha_+}}{\sigma}<0,
    \end{align*}
    which means $\sigma$ is a source and $\varsigma_k$ are sinks of the vector field $\dot z = i[f(z) - z]$. Moreover, because of \Cref{prop:combinatorics} (2) each fixed point must be at least at the end of one of the trajectories $\gamma_{i,s,f_{\alpha_+}}$. This implies
    \begin{align*}
        \sigma = \gamma_{i,s,f_{\alpha_+}}(-\infty) & \quad \text{and} \\
        \varsigma_i = \gamma_{i,s,f_{\alpha_+}}(+\infty) & \quad \text{for all $i\in\Z/q\Z$, $s\in\{+,-\}$},
    \end{align*}
    since there are $q$ fixed points $\varsigma_i$ and $q$ trajectories $\gamma_{i,s,f_{\alpha_+}}(+\infty)$. All in all, this implies that the only possible gate structure $G\in\textsf{Admissible}$ is $G=(1,2,\dots,q)$.
    The same reasoning can be applied to $f_{\alpha_-}$ yielding $\gate{f_{\alpha_-}} = (2,3,4,\dots,q,1)$.
\end{proof}

The two gate structures of $f_{\alpha_+}$ and $f_{\alpha_-}$ for $q=3$ can be seen \Cref{fig:gate structure}.

\newcommand{\arrow}[2]{
\begin{scope}[rotate=#2, shift={(2,0)}]
\begin{scope}[xscale=#1, yscale=1]
    \draw[-To] (-0.3,0) -- (0.3,0);    
\end{scope}
\end{scope}}

\newcommand{\reppetal}[2]{
\begin{scope}[rotate=#1] 
    \draw (0,0) .. controls (4,-2.6) and (4,2.5) .. (1, 0.57) node at (4,0) {#2};
\end{scope}}

\newcommand{\attpetal}[2]{
\begin{scope}[xscale=1, yscale=-1]
    \reppetal{-#1}{#2};
\end{scope}}

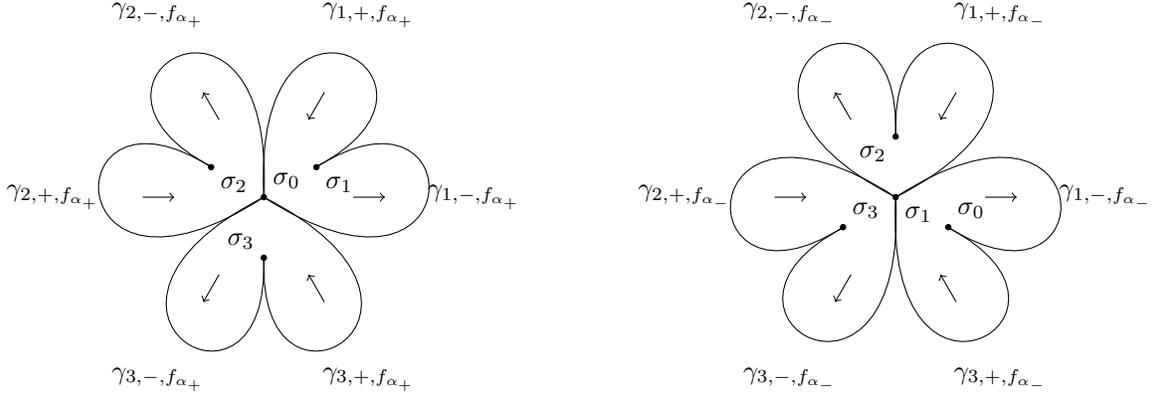
\begin{figure}
    \centering
    \begin{tikzpicture}
    \begin{scope}[scale=0.7]
        \reppetal{0}{$\gamma_{1,-,f_{\alpha_+}}$}
        \arrow{1}{0}
        \reppetal{120}{$\gamma_{2,-,f_{\alpha_+}}$}
        \arrow{1}{120}
        \reppetal{240}{$\gamma_{3,-,f_{\alpha_+}}$}
        \arrow{1}{240}
        \attpetal{60}{$\gamma_{1,+,f_{\alpha_+}}$}
        \arrow{-1}{60}
        \attpetal{180}{$\gamma_{2,+,f_{\alpha_+}}$}
        \arrow{-1}{180}
        \attpetal{300}{$\gamma_{3,+,f_{\alpha_+}}$}
        \arrow{-1}{300}
        \filldraw (0,0) circle (0.05) node[anchor=south west] {$\sigma_0$};
        \filldraw (1,0.57) circle (0.05) node[anchor=north west] {$\sigma_1$};
        \filldraw (-1,0.57) circle (0.05) node[anchor=north west] {$\sigma_2$};
        \filldraw (0,-1.151) circle (0.05) node[anchor=south east] {$\sigma_3$};
        \begin{scope}[shift={(12, 0)}]
            \attpetal{0}{$\gamma_{1,-,f_{\alpha_-}}$}
            \arrow{1}{0}
            \attpetal{120}{$\gamma_{2,-,f_{\alpha_-}}$}
            \arrow{1}{120}
            \attpetal{240}{$\gamma_{3,-,f_{\alpha_-}}$}
            \arrow{1}{240}
            \reppetal{60}{$\gamma_{1,+,f_{\alpha_-}}$}
            \arrow{-1}{60}
            \reppetal{180}{$\gamma_{2,+,f_{\alpha_-}}$}
            \arrow{-1}{180}
            \reppetal{300}{$\gamma_{3,+,f_{\alpha_-}}$}
            \arrow{-1}{300}
            \filldraw (0,0) circle (0.05) node[anchor=north west] {$\sigma_1$};
            \filldraw (1,-0.57) circle (0.05) node[anchor=south west] {$\sigma_0$};
            \filldraw (-1,-0.57) circle (0.05) node[anchor=south west] {$\sigma_3$};
            \filldraw (0,1.151) circle (0.05) node[anchor=north east] {$\sigma_2$};
        \end{scope}
    \end{scope}
    \end{tikzpicture}
    \caption{Gate structures of $f_{\alpha_+}$ and $f_{\alpha_-}$ for $q=3$.}
    \label{fig:gate structure}
\end{figure}

Due to an algorithm of Oudkerk on page 14 in \cite{oudkerk} the fixed points can be relabeled in the following way:
\begin{itemize}
	\item for $\alpha_+$, we set  $\sigma_0 := \sigma$ and $\sigma_i := \varsigma_i$,  where $1\leq i \leq q$ 
	\item for $\alpha_-$, we set $\sigma_0 := \gamma_{1,+,f_{\alpha_-}}(-\infty)$, $\sigma_1 := \sigma = \gamma_{1,+,f_{\alpha_-}}(+\infty)$ and $\sigma_i := \gamma_{i,+,f_{\alpha_-}}(-\infty)$,  where $2\leq i \leq q$. 
\end{itemize}
 These labels will be important when we calculate the lifted phases (see \Cref{the:formula lph}).

We have now established that small parameters $\alpha_\pm$ on parameter rays of a satellite bifurcation $c_0$ are well behaved, and we have determined their gate structures. 

 Our next goal is to approximate the number of iterations that the orbit $(z_n)$ of the critical point $z_0 = 0$ of the map $p_{c_0+\alpha_\pm}$ takes when starting in some attracting petal, iterating through some gate and reaching some repelling petal, until the escape radius $R$ is reached, i.e.\ $|z_n|>R$. 
 To do this, we will approximate the number of iterations for the corresponding map $f_{\alpha_\pm}$ of points starting in some starting region $K_+$ and reaching some landing region $K_-$. Then \Cref{prop:central proposition} makes it possible to relate this number to the multipliers of the fixed points which depend on the perturbation $\alpha_{\pm}$. This is what our next lemma does. 
 
 But first, we need the following definition:
\begin{definition}[Horn property]
    Let $i\in\Z/q\Z$, $G_i\neq\star$ and $K_+\subset S'_{i,+,f_0}$ be compact. 
    We say $f\in\WB(G)$ \emph{satisfies the horn property} for a given $K_+$, a gate $i$ with $G_i=j\neq \star$, and a distance $\delta>0$ if for all $z\in K_+$, we have
        \[
    f^{\circ n_f}(z) \in S'_{j,-,f} \setminus (D_\delta(\sigma_+) \cup D_\delta(\sigma_-))
    \]
    where $\sigma_\pm:=\gamma_{i,+,f}(\pm\infty)$ are the two fixed points bounding the gate, and  $n_f=n_f(z)$ is the integer given by Proposition~\ref{prop:central proposition} (the transit time).

    We denote by $\WB(G, i, K_+,\delta)$ the set of all $(f,\DoDef(f))\in\WB(G)$ that satisfy the horn property for the $i$th gate with given $K_+$ and $\delta$.
\end{definition}

The meaning of the horn property is that all $z\in K_+$, after passing the gate and reaching the fundamental region $S'_{j,-,f}$ of some repelling petal, stay uniformly away from the fixed points at the two ends of $S'_{j,-,f}$.  

The following lemma is a generalization of \cite[Theorem~1.2]{paul}. In this lemma, we fix a gate structure $G$, a gate $i\in\Z/q\Z$ with $j:=G_i  \neq \star$,  a compact starting region $K_+\subset S'_{i,+,f_0}$, and a distance $\delta>0$. For $f\in\WB(G,i,K_+,\delta)$, translate coordinates so that the fixed point $\gamma_{i,+,f}(-\infty) = 0$. Define the landing region $K_- := S'_{j,-,f_0}\setminus D_{\delta/2}(0)$. For $z_0\in K_+$ define $N_f(z_0):=\min\{k\in\N\mid f^{\circ k}(z_0)\in K_-\}$ (the minimal number of iterations until the orbit of $z_0$ enters $K_-$). 

\begin{lemma}\label{lem:bound}
	For $f\to f_0$ in $f\in\WB(G,i,K_+,\delta)$ satisfying \Cref{prop:convergence criteria}~(1) there exists a $C > 0$ such that for all $z\in K_+$ we have $|N_f(z) - \lph{i}{f}| < C$ as $f\to f_0$.
\end{lemma}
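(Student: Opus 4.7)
The approach is to combine the transit identity of Proposition~\ref{prop:central proposition} with two uniform estimates: one bounding how far $N_f(z)$ differs from the transit time $n_f(z)$ appearing in that identity, and one bounding the two Fatou coordinate evaluations in that identity. Concretely, for each $z \in K_+$ Proposition~\ref{prop:central proposition} produces the smallest positive integer $n_f = n_f(z)$ with $f^{\circ n_f}(z) \in S'_{j,-,f}$, and it expresses $n_f$ as a combination of $\lph{i}{f}$ and the two Fatou coordinate values $\Phi_{i,+,f}(z)$ and $\Phi_{j,-,f}(f^{\circ n_f}(z))$. Once the Fatou coordinates are controlled and $N_f(z)$ is shown to be close to $n_f(z)$, the lemma follows by a direct rearrangement.

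For the first ingredient, $|N_f(z) - n_f(z)| \le C_1$, the horn property forces $f^{\circ n_f}(z) \in S'_{j,-,f} \setminus (D_\delta(\sigma_+) \cup D_\delta(\sigma_-))$, and in particular at distance at least $\delta$ from $0$. Theorem~\ref{the:existence fatou coords}(3) tells us the fundamental region $S'_{j,-,f}$ varies Hausdorff-continuously in $f$ on $\WB(G)$, so for $f$ near $f_0$ the compact region $S'_{j,-,f} \setminus D_\delta(0)$ sits inside a neighborhood of $K_- = S'_{j,-,f_0} \setminus D_{\delta/2}(0)$. Either $f^{\circ n_f}(z)$ is already in $K_-$, or a uniformly bounded number of additional iterates of $f$ brings the orbit into $K_-$; this uses that iteration by $f$ simply translates the Fatou coordinate by $1$, so only a fixed finite portion of the Écalle cylinder $\C/\Z$ ever needs to be traversed.

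For the second ingredient, the two Fatou coordinates are bounded uniformly. The compact-open continuity of $f \mapsto \Phi_{i,+,f}$ and $f \mapsto \Phi_{j,-,f}$ on $\WB(G)$ from Theorem~\ref{the:existence fatou coords}(4) and Proposition~\ref{prop:convergence criteria}(4), together with the fact that $K_+ \subset S'_{i,+,f_0}$ is compact, yields $|\Phi_{i,+,f}(z)| \le M_+$ uniformly in $z$ and $f$. For the exit side, the horn property keeps $f^{\circ n_f}(z)$ at distance at least $\delta$ from both endpoints of $S'_{j,-,f}$; since $\Phi_{j,-,f}$ is continuous on $S'_{j,-,f}$ and only exhibits singular behaviour as one approaches those endpoints, combined with the Hausdorff continuity of $\overline{U_{j,-,f}}$ on $\WB(G)$, we obtain $|\Phi_{j,-,f}(f^{\circ n_f}(z))| \le M_-$ uniformly. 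Substituting these bounds into the transit identity of Proposition~\ref{prop:central proposition} and combining with the first ingredient yields $|N_f(z) - \lph{i}{f}| \le M_+ + M_- + C_1 + 1$ for $f$ sufficiently close to $f_0$.

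The principal obstacle is the uniform bound on the exit-side Fatou coordinate: $\Phi_{j,-,f}$ has singularities at the two fixed points bounding the $i$th gate, and those fixed points themselves move with $f$, so any na\"ive compactness argument fails. The horn property is precisely the hypothesis that rules out the orbit approaching those singularities, and restricting to a single stratum $\WB(G)$ is what permits Hausdorff-continuous tracking of the fundamental regions, upgrading the pointwise continuity statements of Oudkerk's theory to the uniform estimates needed here.
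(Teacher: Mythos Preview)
Your proposal is correct and follows essentially the same two-step structure as the paper's proof: first bound $|N_f(z)-n_f(z)|$ via Hausdorff convergence of the fundamental regions and the horn property, then bound the two Fatou-coordinate terms in the transit identity of Proposition~\ref{prop:central proposition} by compactness on the entry side and the $\delta$-separation from the fixed points on the exit side. One small correction: since $f_0\notin\WB(G)$, the Hausdorff convergence $S_{j,-,f}\to S_{j,-,f_0}$ does not follow from Theorem~\ref{the:existence fatou coords}(3) alone but rather from Proposition~\ref{prop:convergence criteria}(2), which is available precisely because the lemma assumes criterion~(1); the paper invokes (2) here and obtains the sharper bound $|N_f-n_f|\le 1$.
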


\begin{proof}
    For $f\in\WB(G,i,K_+,\delta)$ and $z_0\in K_+$ we have some integer $n_f$ such that $f^{\circ n_f}(z_0) =: z' \in S_{j,-,f}$ by proposition \ref{prop:central proposition}. Our goal now is to show that $n_f\approx N_f$. Let $\textbf{S}_- := f_0^{-1}(S_{j,-,f_0})\cup S_{j,-,f_0} \cup f_0(S_{j,-,f_0})$. Let $U\subset \textbf{S}_-\cup D_\delta(0)$ be some $\varepsilon$-neighborhood of $S_{j,-,f_0}$. Moreover, by taking $f$ sufficiently close to $f_0$, we have $z'\in U \setminus D_\delta(0)$ because $S_{j,-,f}\to S_{j,-,f_0}$ by \Cref{prop:convergence criteria}~(2) and therefore eventually $S_{j,-,f}\subset U$. This means that one of $f_0^{-1}(z'), z',f_0(z')\in S_{j,-,f_0}$ but none lie in $D_{\delta/2}(0)$. By taking $f$ sufficiently close to $f_0$ on $\textbf{S}_-\setminus D_\delta(0)$, we can ensure that one of $f^{-1}(z'), z',f(z')\in K_-$. This means $|N_f - n_f| \leq 1$. Also see \Cref{fig:bounding N} for a sketch of the situation.

    Observe now that the values $\Phi_{i,+,f}(z_0)$ and $\Phi_{j,-,f}(f^{\circ n_f}(z_0))$ in the equation from \Cref{prop:central proposition} converge as $f\to f_0$. For $\Phi_{i,+,f}(z_0)$ this is obvious due to \Cref{prop:convergence criteria}~(4) and for $\Phi_{j,-,f}(f^{\circ n_f}(z_0))$ we know that $f^{\circ n_f}(z_0) \in f^{-1}(K_-) \cup K_- \cup f(K_-) =: \textbf{K}_-$. Taking $f$ sufficiently close to $f_0$ we can ensure that $\textbf{K}_-$ lies in $U_{j,-,f}$ and $U_{j,-,f_0}$. Then $\Phi_{j,-,f}$ and $\Phi_{j,-,f_0}$ are defined on $\textbf{K}_-$, hence $|\Phi_{j,-,f}(f^{\circ n_f}(z_0))| \leq \sup_{z\in\textbf{K}_-}|\Phi_{j,-,f}(z)|$. Thus as $f\to f_0$ both $\Phi_{i,+,f}(z_0)$ and $\Phi_{j,-,f}(f^{\circ n_f}(z_0))$ converge and therefore
    \[
        |N_f - |\lph{i}{f}|| \leq |N_f + \lph{i}{f}| < C
    \]
    is bounded by some real constant $C$.
\end{proof}

\begin{figure}[h]
    \centering
    \begin{tikzpicture}
        \filldraw[fill=gray!20!white, draw=black]
            (0,0) .. controls (-3,3.3) and (3,3.3) ..
            (0,0) .. controls (6.5,6) and (-6.5,6) .. cycle
            node at (2.5, 3.5) {$\textbf{S}_{-}$};
        \filldraw[fill=red!20!white, draw=red!20!white, opacity=1,line width=7pt,rounded corners]
            (0,0) .. controls (-3.5,4) and (3.5,4) ..
            (0,0) .. controls (5,5) and (-5,5) .. cycle;
        \filldraw[fill=gray!20!white, draw=black]
            (0,0) .. controls (-3.5,4) and (3.5,4) ..
            (0,0) .. controls (5,5) and (-5,5) .. cycle
            node at (0,3.3) {$S_{j,-,0}$};
        \fill[fill=blue!30!white, opacity=0.1]
            (-0.15,0) .. controls (-3.5,4) and (3.5,4) ..
            (0,0) .. controls (5,5) and (-5,5) .. cycle;
        \draw
            (-0.15,0) .. controls (-3.5,4) and (3.5,4) ..
            (0,0) .. controls (5,5) and (-5,5) .. cycle
            node at (-2.2,2) {$S_{j,-,f}$};
        \filldraw (-0.15, 0) circle (0.05) node[anchor=north east] {$\sigma(f)$};
        \filldraw (0,0) circle (0.05) node[anchor=north west] {0};
        \draw[dashed] (0,0) circle (2) node at (2,-1.8) {$D_{\delta}(0)$};
    \end{tikzpicture}
    \caption{Taking $f$ sufficiently close to $f_0$ we can ensure that $|N_f - n_f|\leq 1$. Here, $S_{j,-,f}$ is sufficiently close to $S_{j,-,f_0}$ such that it is contained in an $\varepsilon$-neighborhood $U$ of $S_{j,-,f_0}$ (in red), which in turn is contained in $\textbf{S}_-\cup D_\delta(0)$. $S_{j,-,f}$ ends at 0 and some fixed point $\sigma(f)$ distinct from 0, while both ends of $S_{j,-,f_0}$ land at 0.}
    \label{fig:bounding N}
\end{figure}
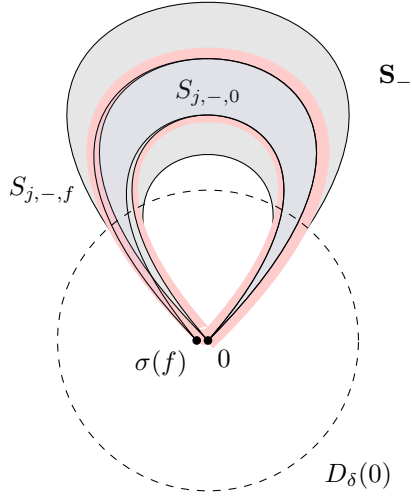

Now we can finally state and prove our first main theorem. This is a more precise reformulation of Theorem~\ref{Thm:One} in the satellite case. 

\begin{theorem}[$\pi$ in the Mandelbrot set, satellite case]\label{the:main theorem}
    Let $c_0$ be a satellite bifurcation from period $n$ to period $qn$. Let $\alpha_\pm(t)$ be defined as above and $N(\alpha_\pm(t)) := \min\{k\in\N\mid |p_{c_0+\alpha_\pm(t)}^{\circ k}(0)|>2\}$. Then for $t\to1$
    \begin{equation}\label{eq:main theorem}
        N(\alpha_\pm(t)) = \pi\cdot \frac{2qn}{|\mu_{qn}'(c_0)\alpha_\pm(t)|} + O(1)
        \;
    \end{equation}
\end{theorem}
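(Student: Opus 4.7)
The plan is to reduce the escape-time count to Lemma~\ref{lem:bound} for the conjugated map $f_{\alpha_\pm}$, compute the lifted phase $\lph{i}{f_{\alpha_\pm}}$ via Theorem~\ref{the:formula lph} together with the multiplier expansions from the proof of Proposition~\ref{prop:well behaved}, and then convert the $f$-iteration count back to a $p$-iteration count via the factor $qn$.

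Since $f_{\alpha_\pm}=\varphi_{c_0}\circ p_{c_0+\alpha_\pm}^{\circ qn}\circ\varphi_{c_0}^{-1}$, a single $f$-iteration corresponds to exactly $qn$ $p$-iterations. First I would check that the critical orbit of $p_{c_0}$ enters an attracting petal of $f_0$ (after conjugation by $\varphi_{c_0}$) in some finite number $k_0$ of steps; by continuous dependence on the parameter, for $\alpha_\pm$ sufficiently close to $0$ the orbit of $p_{c_0+\alpha_\pm}$ enters a prescribed compact set $K_+\subset S'_{i,+,f_0}$ after the same number of steps, for some $i\in\Z/q\Z$ determined by which petal receives the critical orbit. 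Then I would verify the horn property: because $c_0+\alpha_\pm(t)\notin\M$, the critical orbit must escape rather than be trapped by any fixed point, so the $f_{\alpha_\pm}$-orbit traverses the gate $i$ and arrives in the landing region $K_-\subset S'_{j,-,f_0}$ at a bounded distance from both gate endpoints, where $j=\gate[i]{f_{\alpha_\pm}}$. Lemma~\ref{lem:bound} then yields $N_{f_{\alpha_\pm}}(\varphi_{c_0}(p_{c_0+\alpha_\pm}^{\circ k_0}(0)))=|\lph{i}{f_{\alpha_\pm}}|+O(1)$, and since $K_-$ lies in a repelling petal the additional $f$-iterations needed to exceed the escape radius are uniformly bounded, so altogether $N(\alpha_\pm)=qn\cdot|\lph{i}{f_{\alpha_\pm}}|+O(1)$.

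Next I would evaluate the lifted phase. By Theorem~\ref{the:formula lph} and the labeling of fixed points introduced after Proposition~\ref{prop:gate structure}, the sum over $\text{Fix}^u_i$ (or $\text{Fix}^\ell_i$) not containing $\sigma_0$ picks out the single simple sink $\sigma_i$ on that side of the gate, so
\[
    \lph{i}{f_{\alpha_\pm}} = \pm\,\jind{f_{\alpha_\pm}}{\sigma_i} = \mp\frac{2\pi i}{\log f'_{\alpha_\pm}(\sigma_i)}.
\]
Substituting $f'_{\alpha_\pm}(\sigma_i)=1+\mu_{qn}'(c_0)\alpha_\pm+O(\alpha_\pm^2)$ from Proposition~\ref{prop:well behaved} and expanding $\log(1+x)=x+O(x^2)$ yields $\lph{i}{f_{\alpha_\pm}}=\mp 2\pi i/(\mu_{qn}'(c_0)\alpha_\pm)+O(1)$. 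The parameter rays land tangent to the common boundary of the hyperbolic components at $c_0$, whose image under the conformal map $\mu_{qn}$ is tangent to the vertical at $1\in\partial\D$, so $\mu_{qn}'(c_0)\alpha_\pm$ is asymptotically purely imaginary; consequently $\lph{i}{f_{\alpha_\pm}}$ is asymptotically real and $|\lph{i}{f_{\alpha_\pm}}|=2\pi/|\mu_{qn}'(c_0)\alpha_\pm|+O(1)$. Substituting into the previous paragraph proves \eqref{eq:main theorem}.

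The main obstacle I expect is making the horn property uniform in $t$: one needs a fixed $\delta>0$ such that, as $\alpha_\pm\to 0$, the critical orbit exits the gate at distance at least $\delta$ from both bounding fixed points. This should follow from tracking the Fatou-coordinate image of the critical orbit, which advances by $+1$ per $f$-iteration while the image of $K_+$ lies in a compact strip, combined with the continuity of Fatou coordinates on $\WB(G)$ from Proposition~\ref{prop:convergence criteria}(4). A secondary subtlety is selecting the correct sign in Theorem~\ref{the:formula lph} for each of $\alpha_+$ and $\alpha_-$; the two branches correspond to the opposite signs of $\Imag(\mu_{qn}'(c_0)\alpha_\pm)$, and consistency with $\Real\lph{i}{f_{\alpha_\pm}}\to-\infty$ from Proposition~\ref{prop:convergence criteria}(1) selects the branch on each parameter ray.
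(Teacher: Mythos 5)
Your proposal follows the same overall route as the paper's proof: conjugate $p_{c_0+\alpha_\pm}^{\circ qn}$ to $f_{\alpha_\pm}$, feed the critical orbit into Lemma~\ref{lem:bound}, compute the lifted phase via Theorem~\ref{the:formula lph} and the multiplier expansion from the proof of Proposition~\ref{prop:well behaved}, and multiply by $qn$ to convert back to escape time. The lifted-phase computation, the sign bookkeeping via the branch of $\log$ and $\Real\lph{i}{f_{\alpha_\pm}}\to-\infty$, and the observation that $\mu_{qn}'(c_0)\alpha_\pm$ becomes asymptotically imaginary all agree with the paper (note that the final modulus $|\lph{i}{f_{\alpha_\pm}}|=2\pi/|\mu_{qn}'(c_0)\alpha_\pm|+O(1)$ actually follows directly from $|{-2\pi i/(\mu_{qn}'(c_0)\alpha_\pm)}|$ without invoking the tangency, though the tangency is indeed needed to verify $\Real\lph{i}{f_{\alpha_\pm}}\to-\infty$).

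The genuine gap is your justification that the critical orbit traverses \emph{exactly one} gate. You argue only that $c_0+\alpha_\pm(t)\notin\M$ forces escape, so the orbit ``traverses the gate $i$.'' That rules out the orbit being trapped, but it does not rule out the orbit exiting the repelling petal and then re-entering a second attracting petal, passing through a second gate, and so on; if that happened the count would be roughly a small multiple of $|\lph{i}{f_{\alpha_\pm}}|$ rather than $|\lph{i}{f_{\alpha_\pm}}|+O(1)$, and your claim that ``the additional $f$-iterations needed to exceed the escape radius are uniformly bounded'' would fail. The paper establishes the single-gate traversal by exploiting that the critical value lies on the dynamic ray at angle $\vartheta_\pm$, whose $q$ images under $p_{c_0+\alpha_\pm}^{\circ n}$ are permuted among the $q$ repelling petals, are pairwise disjoint, and must each pass through the same number of gates; combined with the admissible gate structure $(1,\dots,q)$ (resp.\ its cyclic shift) this forces at most one gate per image. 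Without some version of that combinatorial argument your $O(1)$ tail estimate is unjustified. Your proposed Fatou-coordinate tracking does address the uniformity of $\delta$ in the horn property, but that is a separate issue from the number of gates traversed.
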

\begin{proof}
    We will apply Oudkerk's theory to this situation by again examining $f_{\alpha_\pm} := \varphi_{c_0} \circ p_{c_0+\alpha_\pm}^{\circ qn}\circ\varphi_{c_0}^{-1}$. The critical point $z_0 = 0$ of the maps $p_{c}$ gets mapped to $\varphi_{c_0}(0)$ in these coordinates. First of all, the critical value $p_{c_0+\alpha_\pm}(0)$ lies per definition of the parameter rays on the dynamic rays of $p_{c_0+\alpha_\pm}$ at angles $\vartheta_\pm$ and hence the whole orbit will escape to $\infty$ on these.
    
    We additionally need to know whether the critical orbit when viewed under the perturbed map $p_{c_0+\alpha_\pm}^{\circ nq}$ (or equivalently under $f_{\alpha_\pm}$) will enter through some attracting petal $i\in\Z/q\Z$ and exit through some neighboring repelling petal $j\in\Z/q\Z$ without entering any other petals $U_{k,s,f_{\alpha_+}}$ with $k\in\Z/q\Z$, $s\in\{+,-\}$ and $i\neq k\neq j$ if the perturbation $\alpha_\pm$ lies on the parameter rays of $c_0$. However, this is evident because the dynamic ray $\vartheta_\pm$ is of exact period $q$ when viewed under $p_{c_0+\alpha_\pm}^{\circ n}$ and its images are permuted between the $q$ repelling petals. This means that under $p_{c_0+\alpha_\pm}^{\circ nq}$ the images are fixed in one repelling petal and that each image must exhibit the same behavior as any other image of the ray, i.e.\ if one image passes through two gates, so do the others. Furthermore, it is known that the dynamic rays land at the (images under $p_{c_0+\alpha_\pm}$ of the) critical point before breaking up for parameters $c_0 + \alpha_\pm$ on the parameter rays of $c_0$. Hence, every such image of the dynamic ray will pass through at least one gate to reach an attracting petal containing the images of the critical point. Because the images of the dynamic ray must not intersect by definition and because each image of the dynamic ray must pass through the same number of gates as every other image, it then is under consideration of the gate structure easy to see that the images of the dynamic ray will only pass through at most one gate before reaching an image of the critical point. Therefore, the critical orbit only passes through a single gate from one attracting petal to one repelling petal under the map $p_{c_0+\alpha_\pm}^{\circ nq}$ before diverging to $\infty$.
    
    Thus we can fix some $K_+\subset S'_{i,+,f_0}$ such that for $\alpha_\pm$ sufficiently close to $0$ (or $t$ close to $1$) the critical orbit $f_{\alpha_\pm}^{\circ k(\alpha_\pm)}(\varphi_{c_0}(0))\in K_+$ after some iterations $k(\alpha_\pm) = O(1)$ of constant order. Moreover, since the dynamic ray depends continuously on the parameter in a repelling petal there must exist some $\delta>0$ such that the horn property is satisfied again for $\alpha_\pm$ close to $0$. Additionally, we can see in \Cref{eq:lifted phase} that \Cref{prop:convergence criteria} (1) is satisfied. Thus, we can apply \Cref{lem:bound} to obtain that $N_{f_{\alpha_\pm}} := \min\{n\in\N\mid f_{\alpha_\pm}^{\circ (k(\alpha_\pm) + n)}(\varphi_{c_0}(0))\in K_-\}$ where $K_- := S'_{j,-,f_0}\setminus D_{\delta/2}(\sigma_0)$ and
    \[
        |N_{f_{\alpha_\pm}} - |\lph{i}{f_{\alpha_\pm}}|| < C.
    \]
    After this, the critical again takes some further iterations $\ell(\alpha_\pm) = O(1)$ until it lies one the side connecting to $\infty$ of the Jordan curve $\varphi_{c_0}(\{z\in\C\mid |z| = 2 \})$. Hence, $N(\alpha_\pm) = qnN_{f_{\alpha_\pm}} + O(1)$.

    However, let us now turn to calculate $\lph{i}{f_{\alpha_\pm}}$ which can be done with \Cref{the:formula lph} and under consideration of the gate structure from \Cref{prop:gate structure}. Notice that for $f_{\alpha_+}$ the fixed point $\sigma_0 = \gamma_{i,+,f_{\alpha_+}}(-\infty)$ for all $i\in\Z/q\Z$ and since each petal is connected to its immediate neighboring petal only $\sigma_i\in\text{Fix}^u_i(f)$. It follows that
    \begin{align}
        \lph{i}{f_{\alpha_+}} &= \jind{f_{\alpha_+}}{\sigma_i} = -\frac{2\pi i}{\log f'_{\alpha_+}(\sigma_i)} = -\frac{2\pi i}{\log(1 + \mu_{qn}(c_0)\alpha_+ + O(\alpha_+^2)} \nonumber \\
        &= -\frac{2\pi i}{\mu_{qn}(c_0)\alpha_+ (1+ O(\alpha_+))} = -\frac{2\pi i}{\mu_{qn}(c_0)\alpha_+}(1+ O(\alpha_+)) \nonumber \\
        &= -\frac{2\pi i}{\mu_{qn}(c_0)\alpha_+} + O(1). \label{eq:lifted phase}
    \end{align}
    Similarly for $f_{\alpha_-}$ we obtain $\lph{i}{f_{\alpha_-}} = -\frac{2\pi i}{\mu_{qn}(c_0)\alpha_+} + O(1)$.

    From this it follows that $N_{f_{\alpha_\pm}} = \frac{2\pi}{|\mu_{qn}(c_0)\alpha_\pm|} + O(1)$ and thus
    \begin{equation*}
        N(\alpha_\pm(t)) = \pi\cdot\frac{2qn}{|\mu_{qn}(c_0)\alpha_\pm(t)|} + O(1)
    \end{equation*}
    as $t\to1$.
\end{proof}

This completes the proof of the occurrence of $\pi$ at all satellite bifurcations in $\M$.

From this we also obtain the non-trivial corollary that the escape rates along both parameter rays of a satellite bifurcation are equal.
\begin{corollary}[Rates of escape along parameter rays]
    Let $c_0$ be a satellite bifurcation and let $\alpha_+(t)$ and $\alpha_-(t)$ be parameterized as above. Then for some $t_+,t_-\in\R$ with  $|\alpha_+(t_+)| = |\alpha_-(t_-)|$ we have $N(\alpha_+(t_+)) = N(\alpha_-(t_-)) + O(1)$.
\end{corollary}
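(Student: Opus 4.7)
The plan is to derive the corollary as an almost immediate consequence of \Cref{the:main theorem}. Observe that the leading term in the asymptotic formula
\[
    N(\alpha_\pm(t)) = \pi\cdot\frac{2qn}{|\mu_{qn}'(c_0)\alpha_\pm(t)|} + O(1)
\]
depends on the perturbation only through the modulus $|\alpha_\pm(t)|$ and on the quantity $|\mu_{qn}'(c_0)|$, which is the same constant for both rays (it is an intrinsic invariant of the parabolic parameter $c_0$, independent of which of the two rays lands at $c_0$). Hence, whenever one chooses $t_+, t_-$ with $|\alpha_+(t_+)| = |\alpha_-(t_-)|$, the main terms coincide exactly, and subtracting the two asymptotic expansions yields $N(\alpha_+(t_+)) - N(\alpha_-(t_-)) = O(1)$.

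To see that such pairs $(t_+, t_-)$ actually exist, I would invoke continuity: the functions $t\mapsto |\alpha_\pm(t)|$ are continuous on $[1,\infty)$ with $|\alpha_\pm(1)| = 0$ and tend to infinity as $t\to\infty$, so for every sufficiently small $r>0$ one can find $t_+, t_- > 1$ with $|\alpha_+(t_+)| = |\alpha_-(t_-)| = r$. Taking $r$ small enough ensures that both values lie in the range where the asymptotic of \Cref{the:main theorem} applies, completing the argument. There is essentially no obstacle here beyond noticing the symmetry of the leading term in $|\alpha_\pm|$; the non-trivial content was already absorbed into the proof of \Cref{the:main theorem}, where the two rays were treated uniformly and yielded identical lifted phases in modulus.
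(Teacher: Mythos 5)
Your derivation is correct and matches the paper's intent exactly: the corollary is stated immediately after Theorem~\ref{the:main theorem} as a direct consequence, with no separate proof given, and the key point is precisely the one you identify, namely that the leading term $\pi\cdot 2qn/|\mu_{qn}'(c_0)\alpha_\pm(t)|$ depends only on $|\alpha_\pm(t)|$ and the ray-independent constant $|\mu_{qn}'(c_0)|$. Your added remark on the existence of matching $t_\pm$ via continuity is a harmless extra detail the paper leaves implicit.
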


\begin{remark}[Conceptual explanation]\label{rem:conceptual}
    \Cref{the:main theorem} also comes with a conceptual explanation similar to the one given in \cite{paul}. This is illustrated in \Cref{fig:conceptual} for the case of $f_{\alpha_+}$ and $q=2$. The curves $\gamma_{i,s,f_{\alpha_+}}$ tend to the fixed points tangent to a certain angle at the fixed points. If we consider the images of $\gamma_{2,+,f_{\alpha_+}}$ under $f_{\alpha_+}$, we can see on the one hand how they match the critical orbit $(z_n)$ iterating through the gate formed by the fixed points $\sigma_0$ and $\sigma_2$. On the other hand, the tangent angle of these curves at the fixed point $\sigma_2$ changes each iteration by a tiny amount, by $\arg f'_{\alpha_+}(\sigma_2) = \Imag\log f'_{\alpha_+}(\sigma_2)$ to be precise. This is due to $f_{\alpha_+}$ being conjugate to $z\mapsto z\cdot f'_{\alpha_+}(\sigma_2)$ in some small neighborhood around $\sigma_2$. The clou is now that, as indicated in \Cref{fig:conceptual}, the total rotation at $\sigma_2$ that $\gamma_{2,+,f_{\alpha_+}}$ has to take until the critical orbit reaches $\gamma_{2,-,f_{\alpha_+}}$ is exactly $2\pi$. Similarly, at $\sigma_0$ the total rotation is exactly $\pi$. Hence we can estimate the number of iterations $N(\alpha_+)$ that the critical orbit takes with
    \[
        N(\alpha_+) \approx \frac{2\pi}{\arg f'_{\alpha_+}(\sigma_2)} \approx \frac{\pi}{\arg f'_{\alpha_+}(\sigma_0) } \approx -\lph{2}{f_{\alpha_+}}.
    \]
    This conceptual understanding which we just outlined for $f_{\alpha_+}$ and $q=2$ also holds in general for all other bifurcations and provides a strong intuition behind the proof. It also gives a satisfying explanation of why $\pi$ appears at all bifurcations in the Mandelbrot set.
\end{remark}

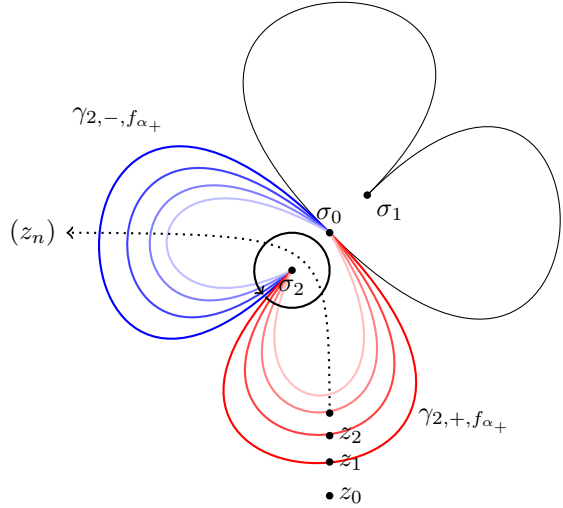
\begin{figure}
    \centering
    \begin{tikzpicture}
        \draw (0.5,0.5) .. controls (-3.5,4.5) and (4.5,4.5) .. (1,1);
        \draw (0.5,0.5) .. controls (4.5,-3.5) and (4.5,4.5) .. (1,1);
        \draw[thick,draw=blue] (0.5,0.5) .. controls (-3.5,4.5) and (-3.5,-3.5) .. (0,0) node at (-2.3,2) {$\gamma_{2,-,f_{\alpha_+}}$};
        \draw[thick,draw=blue!75!white] (0.5,0.5) .. controls (-3,3.5) and (-3,-2.5) .. (0,0);
        \draw[thick,draw=blue!50!white] (0.5,0.5) .. controls (-2.6,2.7) and (-2.6,-1.7) .. (0,0);
        \draw[thick,draw=blue!25!white] (0.5,0.5) .. controls (-2.3,2.1) and (-2.3,-1.1) .. (0,0);
        
        \draw[thick,draw=red] (0.5,0.5) .. controls (4.5,-3.5) and (-3.5,-3.5) .. (0,0) node at (2.3,-2) {$\gamma_{2,+,f_{\alpha_+}}$};
        \draw[thick,draw=red!75!white] (0.5,0.5) .. controls (3.5,-3) and (-2.5,-3) .. (0,0);
        \draw[thick,draw=red!50!white] (0.5,0.5) .. controls (2.7,-2.6) and (-1.7,-2.6) .. (0,0);
        \draw[thick,draw=red!25!white] (0.5,0.5) .. controls (2.1,-2.3) and (-1.1,-2.3) .. (0,0);
        \fill (0,0) circle (0.05) node[anchor=north] {$\sigma_2$};
        \fill (1,1) circle (0.05) node[anchor=north west] {$\sigma_1$};
        \fill (0.5,0.5) circle (0.05) node[anchor=south] {$\sigma_0$};
        \draw [->, thick, rotate=45,draw=black] (-0.5,0) arc [start angle=-180, end angle=175,radius=0.5];

        \fill (0.5,-3) circle (0.05) node[anchor=west] {$z_0$};
        \fill (0.5,-2.55) circle (0.05) node[anchor=west] {$z_1$};
        \fill (0.5,-2.2) circle (0.05) node[anchor=west] {$z_2$};
        \fill (0.5,-1.9) circle (0.05);
        \draw[->, thick,dotted] (0.5,-1.8) .. controls (0.48,0.48) .. (-3,0.5) node[anchor=east] {$(z_n)$};
    \end{tikzpicture}
    \caption{Sketch of the conceptual understanding in the case of $f_{\alpha_+}$ and $q=2$. The critical orbit $(z_n)$ passing through the gate corresponds to a rotation of $2\pi$ around the fixed point $\sigma_2$.}
    \label{fig:conceptual}
\end{figure}

\begin{theorem}[$\pi$ in the Mandelbrot set, primitive case]
    Let $c_0$ be a primitive bifurcation of period $n$. Let $\alpha_\pm(t)$ parameterize the parameter rays as above and $N(\alpha_\pm(t)) := \min\{k\in\N\mid |p_{c_0+\alpha_\pm(t)}^{\circ k}(0)| > 2\}$. Then for $t\to1$
    \begin{equation}
        N(\alpha_\pm(t)) = \pi \cdot \frac{2n}{|\mu'_n(c_0)|\cdot|\alpha_\pm(t)|^{1/2}} + O(1).
    \end{equation}
\end{theorem}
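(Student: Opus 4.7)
The plan is to run the argument of \Cref{the:main theorem} with $q=1$, adjusting for the simpler combinatorics at a primitive parabolic. Set $f_\alpha := \varphi_{c_0}\circ p_{c_0+\alpha}^{\circ n}\circ\varphi_{c_0}^{-1}$ so that $f_0(z) = z + z^2 + O(z^3)$; the parabolic fixed point has multiplicity $q+1=2$. Writing $f_\alpha(z) - z = z^2\bigl(1+O(z)\bigr) - \kappa\alpha + O(\alpha z,\alpha^2)$ for a nonzero constant $\kappa=\kappa(c_0)$, a Puiseux expansion yields two simple fixed points $\sigma_\pm(\alpha) = \pm\sqrt{\kappa\alpha} + O(\alpha)$ with multipliers
\[
    f_\alpha'(\sigma_\pm) = 1 \pm 2\sqrt{\kappa\alpha} + O(\alpha).
\]

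Next I would verify that $f_{\alpha_\pm(t)} \in \WB$ via \Cref{cor:sufficient condition}: the holomorphic indices $\ind{f_\alpha}{\sigma_\pm} = \mp(2\sqrt{\kappa\alpha})^{-1} + O(1)$ have imaginary parts of opposite sign that diverge, so the imaginary part of the index sum over any proper subset of $\operatorname{Fix}(f_\alpha)$ exceeds any fixed $M$. Since $q=1$, the only non-trivial admissible gate structure is $G=(1)$; because $\alpha_\pm(t)\in\C\setminus\M$ the critical orbit escapes, forcing the single gate to be open. The two parameter rays $\alpha_\pm$ produce the same gate structure and merely exchange which of $\sigma_\pm$ lies in $\text{Fix}^u_1$ versus $\text{Fix}^\ell_1$. \Cref{the:formula lph} then collapses to a single $\jmath$-term,
\[
    \lph{1}{f_{\alpha_\pm}} = \mp\frac{2\pi i}{\log\bigl(1\pm 2\sqrt{\kappa\alpha_\pm}+O(\alpha_\pm)\bigr)} = \mp\frac{\pi i}{\sqrt{\kappa\alpha_\pm}} + O(1),
\]
which satisfies $\Real\lph{1}{f_{\alpha_\pm}}\to-\infty$. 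The horn property is inherited from the continuous dependence of the landing dynamic rays, the critical orbit passes through a single gate by the same argument as in \Cref{the:main theorem}, and \Cref{lem:bound} yields $N_{f_{\alpha_\pm}} = \pi/(|\kappa|^{1/2}|\alpha_\pm|^{1/2}) + O(1)$. Since each iterate of $f_\alpha$ corresponds to $n$ iterates of $p_{c_0+\alpha}$, we obtain $N(\alpha_\pm(t)) = n\,N_{f_{\alpha_\pm}} + O(1)$.

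The principal new ingredient, and the expected main obstacle, is the identification of $|\kappa|$ with $|\mu_n'(c_0)|^2/4$. At a primitive root, $\mu_n$ has a square-root branch point at $c_0$, so $\mu_n'(c_0)$ must be interpreted as the complex number defined by $(1-\mu_n(c))^2 = (\mu_n'(c_0))^2(c-c_0) + O((c-c_0)^2)$---the normalization which yields $\tau(1/4)=1$ in agreement with \cite{klebanoff} and \cite{paul}. Squaring the multiplier expansion above gives $(1 - f_\alpha'(\sigma_\pm))^2 = 4\kappa\alpha + O(\alpha^{3/2})$, and matching leading coefficients forces $(\mu_n'(c_0))^2 = 4\kappa$, hence $|\kappa|^{1/2} = |\mu_n'(c_0)|/2$. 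Substituting produces the claimed
\[
    N(\alpha_\pm(t)) = \pi\cdot\frac{2n}{|\mu_n'(c_0)|\cdot|\alpha_\pm(t)|^{1/2}} + O(1).
\]
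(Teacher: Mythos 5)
Your proposal is correct, and it takes a genuinely different route from the paper, which is worth noting because the paper in fact \emph{omits} the proof and instead sketches a reduction via the parameter coordinate change $\lambda(c)=\sqrt{c-c_0}$, after which the primitive component appears as two tangent ``components'' of equal period $n$ and the satellite argument is invoked verbatim. You instead run Oudkerk's machinery directly with $q=1$ in the original parameter $\alpha$, letting the $\sqrt{\alpha}$ scaling emerge from the Puiseux expansion of the fixed points $\sigma_\pm=\pm\sqrt{\kappa\alpha}+O(\alpha)$ rather than from a parameter-space square root. The computations of the multipliers, holomorphic indices, and lifted phase (via Theorem~\ref{the:formula lph}) are correct, and the gate-structure discussion is correct as well: with $q=1$ the only admissible open structure is $G=(1)$, both rays produce it, and the two are distinguished only by which of $\sigma_\pm$ lies in $\mathrm{Fix}^u_1$ versus $\mathrm{Fix}^\ell_1$, so the single $\jmath$-term in the lifted phase formula is the right collapse.

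The piece you rightly identify as the real content---and which the paper leaves entirely implicit---is the meaning of $\mu_n'(c_0)$. At a primitive root the multiplier map has a square-root branch point in $c$ (indeed $\mu_1'(c)\to\infty$ as $c\to 1/4$), so the literal derivative diverges; the paper's $\mu_n'(c_0)$ is tacitly the $\lambda$-derivative $\nu'(0)$ of $\nu(\lambda):=\mu_n(c_0+\lambda^2)$. Your normalization $(1-\mu_n(c))^2=(\mu_n'(c_0))^2(c-c_0)+O((c-c_0)^2)$ is exactly equivalent to $|\nu'(0)|$, and the matching $(1-f_\alpha'(\sigma_\pm))^2=4\kappa\alpha+O(\alpha^{3/2})$ correctly identifies $(\mu_n'(c_0))^2=4\kappa$, hence $|\kappa|^{1/2}=|\mu_n'(c_0)|/2$. (Sanity check at $c_0=1/4$, $n=1$: translating by $1/2$ gives $f_\alpha(w)-w=w^2+\alpha$, so $\kappa=-1$, $|\mu_1'(1/4)|=2$, and $\tau(1/4)=1$, agreeing with Klebanoff and Siewert.) What your route buys is an explicit proof where the paper only gestures; what the paper's route buys is a cleaner conceptual picture---the primitive case literally becomes the $q=1$ satellite case in $\lambda$-coordinates, so no new computation is logically needed. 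Either way the argument about the critical orbit crossing only one gate is being inherited from the satellite proof without re-derivation, but for $q=1$ there is only one attracting and one repelling petal so that step is genuinely easier than in Theorem~\ref{the:main theorem}.
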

We will omit the proof in this case as after the well-known coordinate change $\lambda(c) = \sqrt{c - c_0}$ we have the same situation as with the satellite case: the hyperbolic component of primitive type becomes two components of equal period $n$ whose boundaries are analytic circles tangent at $\lambda(c_0) = 0$. Now the proof is analogous to the satellite case, just that $q=1$, i.e.\ the parabolic periodic point is a fixed point with multiplicity 2 under the map $p_{c_0}^{\circ n}$. When we perturb along the parameter rays, this fixed point splits up into two, whose multipliers are given by the respective multiplier maps of the two hyperbolic components in the new coordinates $\lambda(c)$. The critical orbit then passes through the gate formed by these two fixed points starting in the single attracting petal end and emerging in the single repelling petal before diverging. The coordinate change $\lambda(c)$ also explains the different rate of convergence $|\alpha_\pm|^{1/2}$ for a given $|\alpha_\pm|$.

\section{Numerical experiments}
Since the phenomenon originally was discovered during numerical experiments at parameters $-3/4$, $1/4$, and $-5/4$, we want to present similar experiments for other bifurcations, demonstrating the result from \Cref{the:main theorem}. However, \Cref{the:main theorem} requires approaching bifurcations along their parameter rays which are difficult to calculate accurately. Moreover, original experiments approached the parameter $-3/4$ along the straight vertical line $\alpha(t) = it$ which is not the parameter ray but seems to serve as a sufficient approximation. This motivates \Cref{Thm:Two} which answers the question of how close a sequence $\tilde{c}_n$ must lie to the parameter ray such that the numerical phenomenon can still be observed. We will now provide its proof.

\begin{proof}[Proof of \Cref{Thm:Two}]
    The proof uses the Koebe quarter theorem which can be applied to the biholomorphic map $\Phi_\M\colon \C\setminus\M\to\C\setminus\D$. Fix some $c_n$ and take the open disk $D_R(c_n)$ with maximal radius $R$ such that $D_R(c_n)$ does not intersect $\M$. Let $h(c) := \Phi_\M(R\cdot c + c_n)$ which satisfies $h(\D) = \Phi_\M(D_R(c_n))$. Also let $\beta := \Phi_\M(c_n)$. Then by the Koebe quarter theorem $h(\D)$ contains the disk $D_{|h'(0)|/4}(h(0)) = D_{R\beta/4}(\Phi_\M(c_n))$. Let $g(c) = \Phi_\M^{-1}(\frac{R\beta}{4}\cdot c + \Phi_\M(c_n))$ which satisfies $g(\D) = \Phi_\M^{-1}(D_{R\beta/4}(\Phi_\M(c_n)))$.  We can then apply Koebe again in the other direction yielding
    \[
        D_{|g'(0)|/4}(g(0)) = D_{R/8}(c_n) \subset \Phi_\M^{-1}(D_{R\beta/4}(\Phi_\M(c_n)))\subset D_R(c_n)
    \]
    since $g'(0) = \frac{R\beta}{4}\left(\Phi_\M^{-1}\right)'(\Phi_\M(c_n))= \frac{R\beta}{4}\cdot\frac{1}{\beta}$.
    
    Note that $\Phi_\M(D_{R/8}(c_n))\subset D_{R\beta/4}(\Phi_\M(c_n))$ and that for parameters $c'$ and $c''$ we have $N(c') \leq N(c'') \Leftrightarrow |\Phi_\M(c')| \geq |\Phi_\M(c'')|$ where $N(c) := \min\{n\in\N\mid |p_c^{\circ n}(0)| > 2\}$ denotes the escape time\footnote{The escape radius can be any real number greater than $2$.}.
    Then it follows that the escape time of parameters $c\in D_{R/8}(c_n)$ is bounded from above and below by parameters $\{\Phi_\M(c_a), \Phi_\M(c_b)\} = \partial D_{R\beta/4}(\Phi_\M(c_n))\cap (1,\infty)\cdot e^{2\pi i \vartheta}$ where $\vartheta$ denotes the angle of the parameter ray $\mathcal{R}_\theta$ on which $c_n$ lies.
    The escape times of $c_a$ and $c_b$ in turn are bounded from above and below by the parameters $c'_a$ and $c'_b$ with the maximal and minimal escape times of parameters in $\partial D_R(c_n)\cap\mathcal{R}_\vartheta$. Since the radius $R$ decreases at least quadratically with respect to the distance $\alpha := |c_n - c_0|$  as $c_n \to c_0$, we can apply \Cref{the:main theorem}
    \begin{align*}
        N(\alpha + O(\alpha^2)) &= \pi \cdot \frac{2qn}{|\mu'_{qn}(c_0)|(\alpha + O(\alpha^2))} + O(1) = \pi\cdot\frac{2qn}{|\mu'_{qn}(c_0)|\alpha(1 + O(\alpha))} + O(1) \\
        &= \pi\cdot\frac{2qn}{|\mu'_{qn}(c_0)|\alpha}\cdot(1 + O(\alpha)) + O(1) = \pi\cdot\frac{2qn}{|\mu'_{qn}(c_0)|\alpha} + O(1)
    \end{align*}
    and thus for parameters $c\in D_{R/8}(c_n)$ their escape times $N(c)$ differ at most by a constant from $N(c_n)$.
\end{proof}

We continue with some examples of \Cref{the:main theorem} applied to some bifurcations. To do this for a given bifurcation $c_0$, we equally distribute many parameters on the boundary $D_{|\alpha|}(c_0)$. For each of these, we determine their escape time $N(\alpha)$ and then list the smallest $N(\alpha)$ for a given $|\alpha|$ in \Cref{tab:experiments}. Using \Cref{eq:main theorem}, we can see how $N(\alpha)\cdot\frac{|\alpha\mu'_{qn}|}{2qn}\to\pi$. The derivative of the multiplier map can be easily determined numerically.

\begin{table}
\begin{tabular}{llllll}
$c_0$                       & $qn$ & $\alpha$   & $N(\alpha)$ & $N(\alpha)\cdot\frac{|\alpha\mu'_{qn}(c_0)|}{2qn}$ & $\pi\cdot\frac{2qn}{|\alpha\mu'_{qn}(c_0)|} - N(\alpha)$ \\
\toprule 
$0.25+0.5i$                 & $4$  & $0.1$      & $10$        & $\textcolor{black}{2.828426915}$                   & 1.10720817                                               \\
                            &    & $0.01$     & $109$       & $\textcolor{blue}{3.}\textcolor{black}{082985337}$ & 2.072081699                                              \\
                            &    & $0.001$    & $1109$      & $\textcolor{blue}{3.1}\textcolor{black}{36725449}$ & 1.720816987                                              \\
                            &    & $0.0001$   & $11106$     & $\textcolor{blue}{3.141}\textcolor{black}{250932}$ & 1.208169866                                              \\
                            &    & $0.00001$  & $111070$    & $\textcolor{blue}{3.1415}\textcolor{black}{33774}$ & 2.081698659                                              \\
                            &    & $0.000001$ & $1110721$   & $\textcolor{blue}{3.14159}\textcolor{black}{3171}$ & -0.1830134112                                            \\ \toprule
$-1.125+0.2165063509i$      & $6$  & $0.1$      & $11$        & $\textcolor{blue}{3.}\textcolor{black}{299987013}$ & -0.5279832759                                            \\
                            &    & $0.01$     & $104$       & $\textcolor{blue}{3.1}\textcolor{black}{19987721}$ & 0.7201672408                                             \\
                            &    & $0.001$    & $1048$      & $\textcolor{blue}{3.14}\textcolor{black}{3987627}$ & -0.7983275915                                            \\
                            &    & $0.0001$   & $10470$     & $\textcolor{blue}{3.14}\textcolor{black}{0987639}$ & 2.016724085                                              \\
                            &    & $0.00001$  & $104718$    & $\textcolor{blue}{3.1415}\textcolor{black}{27637}$ & 2.167240848                                              \\
                            &    & $0.000001$ & $1047199$   & $\textcolor{blue}{3.1415}\textcolor{black}{84636}$ & 2.672408482                                              \\ \toprule
$-1.768529152467788+0i$     & 12   & $0.1$      & $4$         & $\textcolor{black}{7.606871243}$                   & -2.348023752                                             \\
                            &    & $0.01$     & $19$        & $\textcolor{blue}{3.}\textcolor{black}{61326384}$  & -2.48023752                                              \\
                            &     & $0.001$    & $166$       & $\textcolor{blue}{3.1}\textcolor{black}{56851566}$ & -0.8023751996                                            \\
                            &     & $0.0001$   & $1651$      & $\textcolor{blue}{3.1}\textcolor{black}{39736105}$ & 0.9762480041                                             \\
                            &     & $0.00001$  & $16518$     & $\textcolor{blue}{3.141}\textcolor{black}{25748}$  & 1.762480041                                              \\
                            &     & $0.000001$ & $165179$    & $\textcolor{blue}{3.141}\textcolor{black}{238462}$ & 18.62480041                                              \\ \toprule
$-0.125+0.649519052838329i$ & 3    & $0.1$      & $17$        & $\textcolor{black}{2.944493608}$                   & 1.137949073                                              \\
                            &     & $0.01$     & $180$       & $\textcolor{blue}{3.1}\textcolor{black}{17699115}$ & 1.379490733                                              \\
                            &     & $0.001$    & $1812$      & $\textcolor{blue}{3.1}\textcolor{black}{38483775}$ & 1.794907326                                              \\
                            &     & $0.0001$   & $18137$     & $\textcolor{blue}{3.141}\textcolor{black}{428269}$ & 0.9490732569                                             \\
                            &     & $0.00001$  & $181379$    & $\textcolor{blue}{3.1415}\textcolor{black}{84154}$ & 0.4907325693                                             \\
                            &     & $0.000001$ & $1813798$   & $\textcolor{blue}{3.14159}\textcolor{black}{801}$  & -3.092674307                                             \\ \bottomrule
\end{tabular}
\caption{We show the convergence to $\pi$ as per \Cref{eq:main theorem} for 4 bifurcations.}
\label{tab:experiments}
\end{table}

These experiments conclude our results.

\newpage


\begin{thebibliography}{AB}
	\bibitem[DH]{orsay} Adrien Douady and John Hubbard. \emph{Etude dynamique des polyn\^ome complexes} (``The Orsay Notes’’). Prépublications mathématiques d’Orsay, 1984/1985.
	\bibitem[GMc]{guckenheimer} John Guckerheimer and Richard McGhee. \emph{A proof of the Mandelbrot N-squared conjecture}. Institut Mittag-Leffler Report No. 15, 1984.
    \bibitem[HK]{numberphile} Brady Haran and Holly Krieger. \emph{Pi and the Mandelbrot set -- Numberphile}. YouTube-Video by ``Numberphile'', \url{https://youtu.be/d0vY0CKYhPY?si=
EuQxjXVODF4B28K3}, 2015.
    \bibitem[Ju]{mandel} Wolf Jung. \emph{Mandel}. Software for real and complex dynamics, \url{https://www.mndynamics.com/indexp.html}.
    \bibitem[Kle]{klebanoff} Aaron Klebanoff. \emph{$\pi$ in the Mandelbrot set}. In: Fractals 9 (4), 393--402, 2001.
	\bibitem[Ou]{oudkerk} Richard Oudkerk. \emph{The parabolic implosion for $f_0(z) = z + z^{\nu+1} + \mathcal{O}\left(z^{\nu+2}\right)$. PhD Thesis, University of Warwick, 1999.}
    \bibitem[Shi]{shishikura_2000} Mitsuhiro Shishikura. \emph{Bifurcation of parabolic fixed points}. In: ``The Mandelbrot set, theme and variations'' (Tan Lei, ed.). London Mathematical Society Lecture Note Series 274, 325--364, 2000.
    \bibitem[Si]{paul} Paul Siewert. \emph{Parabolic bifurcations in holomorphic dynamics}. Bachelor Thesis, University of Göttingen, 2023.
\end{thebibliography}
\end{document}